\tikzset{clause3/.pic={
           \newdimen\R
            \R=2cm
            \foreach \x in {0,60,120,...,300}
                \node[circle, draw, fill=black, inner sep=2pt](\x) at (\x:\R) {};
            \begin{pgfonlayer}{background layer}
                \draw (0:2) \foreach \x in {60,120,...,360} {-- (\x:\R) }; 
                \draw (0:2) -- (180:2);
            \end{pgfonlayer}
             \node[anchor=north] at (-60:2) {};
             \node[anchor=south] at (180:2) {};
             \node[anchor=west] at (60:2) {};
            \node[kite,draw,pattern=grid, line width=1pt,minimum size =0.2cm, anchor=south,rotate=235] at (0:2){} ;
	        \node[kite,draw,pattern=grid, line width=1pt,minimum size =0.2cm, anchor=south,rotate=0] at (120:2){} ;
            \node[kite,draw,pattern=grid, line width=1pt,minimum size =0.2cm, anchor=south,rotate=115] at (240:2){} ;}
            }
\tikzset{clause1/.pic={
           \newdimen\R
            \R=2cm
            \foreach \x in {0,60,120,...,300}
                \node[circle, draw, fill=black, inner sep=2pt](\x) at (\x:\R) {};
            \begin{pgfonlayer}{background layer}
                \draw (0:2) \foreach \x in {60,120,...,360} {-- (\x:\R) }; 
                \draw (0:2) -- (180:2);
            \end{pgfonlayer}
              \node[anchor=south] at (-60:2) {};
             \node[anchor=west] at (180:2) {};
             \node[anchor=east] at (60:2) {};
            \node[kite,draw,pattern=grid, line width=1pt,minimum size =0.2cm, anchor=south,rotate=235] at (0:2){} ;
	        \node[kite,draw,pattern=grid, line width=1pt,minimum size =0.2cm, anchor=south,rotate=0] at (120:2){} ;
            \node[kite,draw,pattern=grid, line width=1pt,minimum size =0.2cm, anchor=south,rotate=115] at (240:2){} ;}
            }
\tikzset{clause2/.pic={
           \newdimen\R
            \R=2cm
            \foreach \x in {0,60,120,...,300}
                \node[circle, draw, fill=black, inner sep=2pt](\x) at (\x:\R) {};
            \begin{pgfonlayer}{background layer}
                \draw (0:2) \foreach \x in {60,120,...,360} {-- (\x:\R) }; 
                \draw (0:2) -- (180:2);
            \end{pgfonlayer}
             \node[anchor=south] at (-60:2) {};
             \node[anchor=west] at (180:2) {};
             \node[anchor=east] at (60:2) {};
            \node[kite,draw,pattern=grid, line width=1pt,minimum size =0.2cm, anchor=south,rotate=235] at (0:2){} ;
	        \node[kite,draw,pattern=grid, line width=1pt,minimum size =0.2cm, anchor=south,rotate=0] at (120:2){} ;
            \node[kite,draw,pattern=grid, line width=1pt,minimum size =0.2cm, anchor=south,rotate=115] at (240:2){} ;}
            }
\def\ps@pprintTitle{%
 \let\@oddhead\@empty
 \let\@evenhead\@empty
 \def\@oddfoot{\centerline{\thepage}}%
 \let\@evenfoot\@oddfoot}
\definecolor{Blue}{rgb}{0,0.5,0.5}
 \newtheorem{theorem}{Theorem}[section]
 \newtheorem{lemma}[theorem]{Lemma}
\newtheorem{question}[section]{Question}
\newcounter{prfcnt}
\newenvironment{proofcnt}{\setcounter{prfcnt}{1}}{}
\newcommand{\proofcount}[1]{{\itshape (\alph{prfcnt})} #1\addtocounter{prfcnt}{1} \newline}
\newcommand{\iC}{\mathit{C}}
\begin{document}

\title{Complexity of Near-3-Choosability Problem}
\author{Sounaka Mishra \and Rohini S \and Sagar S. Sawant }
\address{Department of Mathematics, Indian Institute of Technology Madras, Chennai 600 036, India.}
\email{sounak@iitm.ac.in, s.rohini@smail.iitm.ac.in, sagar@smail.iitm.ac.in}



\maketitle
\begin{abstract}
It is currently an unsolved problem to determine whether a $\triangle$-free planar graph $G$ contains an independent set $A$ such that $G[V_G\setminus A]$ is $2$-choosable. However, in this paper, we take a slightly different approach by relaxing the planarity condition. We prove the $\mathbb{NP}$-completeness of the above decision problem when the graph is $\triangle$-free, $4$-colorable, and of diameter $3$. Building upon this notion, we examine the computational complexity of two optimization problems: minimum near $3$-choosability and minimum $2$-choosable deletion. In the former problem, the goal is to find an independent set $A$ of minimum size in a given graph $G$, such that the induced subgraph $G[V_G \setminus A]$ is $2$-choosable. We establish that this problem is $\mathbb{NP}$-hard to approximate within a factor of $|V_G|^{1-\epsilon}$ for any $\epsilon > 0$, even for planar bipartite graphs.
On the other hand, the problem of minimum $2$-choosable deletion involves determining a vertex set $A \subseteq V_G$ of minimum cardinality such that the induced subgraph $G[V_G \setminus A]$ is $2$-choosable. We prove that this problem is $\mathbb{NP}$-complete, but can be approximated within a factor of $O(\log |V_G|)$.
\end{abstract}
\keywords{Graph Coloring, List Coloring, $\mathbb{NP}$-complete, Approximation Algorithm}

\section{Introduction}
In this paper, we consider graphs that are finite, undirected, and simple. For a graph $G$, we denote $V_G$ and $E_G$ as the vertex set and edge set of $G$, respectively. We assume that $|V_G| =n$ and $|E_G|=m$.

Given a positive integer $\ell$ and a graph $G$, a \emph{proper $\ell$-coloring} of $G$ is a function $f:V_G \rightarrow \{1, 2, \cdots, \ell\}$ that assigns a color from $\{1, 2, \cdots, \ell\}$ to each vertex of $G$ such that for every edge $(u, v) \in E_G$, $f(u) \neq f(v)$. If a graph $G$ admits a proper $\ell$-coloring then we say that $G$ is \emph{$\ell$-colorable}. The chromatic number $\chi(G)$ of a graph $G$ is the smallest non-negative integer $\ell$ such that $G$ is properly $\ell$-colorable. A $k$-list assignment for a graph $G$ is a map $L$ that assigns each vertex $v \in V_G$ a set of permissible colors $L(v) \subset \mathbb{N}$ with $|L(v)| = k$. An $L$-coloring  of a graph $G$ is a proper coloring of $G$ that assigns each vertex $v \in V_G$ a color from its list $L(v)$. We say that $G$ is \emph{$L$-colorable} if there exists an $L$-coloring of $G$. For a positive integer $k$, we say that $G$ is \emph{$k$-choosable} if $G$ has an $L$-coloring, for every $k$-list assignment $L$. The \emph{list-chromatic number} (or choice number) $ch(G)$  of $G$ is the minimum positive integer $k$ such that $G$ is $k$-choosable. List coloring of a graph, which is a generalization of proper coloring, was introduced in \cite{erdos}. It has observed that if a graph is $k$-choosable, it is $\ell$-colorable for all $\ell \geq k$, but the converse is false. 

The $k$-choosability problem for a given graph $G$ involves deciding whether $G$ is a $k$-choosable graph. Previous research has shown that this problem is polynomial-time solvable when $k$ is less than or equal to 2, but becomes $\Pi_2^p$-complete (where $\mathbb{NP} \cup$ co-$\mathbb{NP} \subseteq \Pi_{2}^p$) for $k\geq 3$ \cite{erdos}.

A refinement of the list coloring with respect to the restriction of the list was given by Choi and Kwon in \cite{tcommon}. They considered the list coloring problem for $k$-list assignment having at least $t$ colors common. In particular, a  $k$-common $k$-list coloring is equivalent to the $k$-colorability. In \cite{Zhu}, Xuding introduced another refined scale of choosability. Let $\Lambda =\{\lambda_1,\lambda_2,\dots,\lambda_r\}$ be a partition of a positive integer $k$. A graph $G$ is said to be $\Lambda$-choosable if there exists a decomposition of $V_G$ into $(V_1,V_2,\dots,V_r)$ such that each induced subgraph $G[V_i]$ is $\lambda_i$-choosable for $1 \leq i \leq r$. Using $\Lambda$-choosability, the notion of $k$-colorability (equivalent to $\{1,1,\dots,1\}$-choosability) and $k$-choosabilty come under the same frame structure. Moreover, the $t$-common $k$-list coloring coincides with $\{1,1,\dots,1,k-t\}$-coloring where the part 1 occurs $t$ times. A partition $\Lambda$ of $k$ is a refinement of  another partition $\Gamma$ of $k$ if $\Lambda$ is obtained from $\Gamma$ by subdividing some parts of $\Gamma$. 
In fact, Xuding \cite{Zhu} proved that if a partition $\Lambda$ is a refinement of a partition $\Gamma$, then there exists a graph $G$ that is $\Lambda$-choosable but not $\Gamma$-choosable. 
This raises the following natural question:
\begin{question}{\label{ques: l-k_choosable}}
Let $k,\ell$ be positive integers with $\ell \geq k$. For an $\ell$-colorable graph $G$ which is not $k$-choosable, does there exists a partition $\Lambda$ of $k$ such that $G$ is $\Lambda$-choosable?
\end{question}

For a positive integer $k$, we have the following particular refinement chain.
$$\{k\} \succeq \{1, k-1\} \succeq \{1, 1, k-2\} \succeq \cdots \succeq \{1, 1, \cdots, 1\}$$
In the context of Question \ref{ques: l-k_choosable}, we consider the very first refinement $\{k\} \succeq \{1, k-1\}$ in the above chain. If $G$ is $\{1, k-1\}$-choosable then the vertex set of $G$ can be decomposed as $(A, B)$ such that $G[A]$ is $1$-choosable and $G[B]$ is $(k-1)$-choosable. Note that $1$-choosability of $G[A]$ is equivalent to $A$ being an independent set in $G$. Based on these observations, we define {\it near $k$-choosable} graphs.  
A graph $G$ is said to be \emph{near-$k$-choosable} if there exists a decomposition $(A, B)$ of the vertex set such that $A$ is an independent set and the induced subgraph $G[B]$ is $(k-1)$-choosable. The decomposition is called the \emph{near $k$-choosable decomposition} of $G$. Now
we introduce a new decision problem called \texttt{Near-$k$-Choosability}. 
Given a graph $G$, the \texttt{Near-$k$-Choosability} is to decide whether the graph $G$ is near $k$-choosable. This also leads us to define an optimization problem named \texttt{Min-Near-$k$-Choosability} associated with \texttt{Near-$k$-Choosability}. Given a graph $G$, the objective is to find an independent set $A$ in $G$ of minimum cardinality such that $G[V_G \setminus A]$ is $(k-1)$-choosable.
The aforementioned problems specifically assume the set $A$ in the decomposition $(A,B)$ to be independent. It is worthwhile to explore an optimization problem that relaxes this ``independent'' condition on set $A$. We refer to this revised version of \texttt{Min-Near-$k$-Choosability} as \texttt{Min-$k$-Choosable-Del}. Given a graph $G$, in \texttt{Min-$k$-Choosable-Del} it is required to find a vertex set $A \subseteq V_G$ such that $G[V_G \setminus A]$ is $k$-choosable.

\subsection{Known Results}
A graph $G$ is $\triangle$-free if it has no induced $K_3$ as a subgraph.
$\triangle$-free planar graphs are 3-colorable due to the Grötzsch theorem, and such a coloring can be computed in polynomial time \cite{grotzsch1959}. But Voigt \cite{Voigt}, Gutner \cite{Gutner}, Glebova et al. \cite{Gleb} gave examples of $\triangle$-free planar graphs which are not 3-choosable. Moreover, the examples presented contain an independent set whose complement induces a forest. However, the 1-common 3-list colorability (equivalently, near 3-choosability)  of $\triangle$-free planar graphs remains an open problem that is yet to be resolved \cite{tcommon,Zhu}. In \cite{golovach2017survey}, it is proved that $k$-list coloring problems are polynomial time solvable for $k\leq 2$ and $\mathbb{NP}$-complete for $k\geq 3$.

For $k=3$, the problems \texttt{Near-$k$-Choosability} and the decision version of \texttt{Min-Near-$k$-Choosability} are in class $\mathbb{NP}$ due to the characterization of $2$-choosable graphs given by Erd\"os et al. \cite{erdos} involving the $\emph{core}$ of graphs. 
The core of a graph $G$ is obtained by the successive deletion of degree one vertices. A $\theta_{l,m,n}$-graph consists of two distinguished vertices $u$ and $v$ together with three vertex disjoint paths of length $l,m$ and $n$  sharing common endpoints $u$ and $v$. 

\begin{theorem}{\label{thm:2choosablethm}}\cite{erdos}
    A graph is $2$-choosable if and only if, the core of $G$ belongs to the set $\mathcal{C}=\{K_1,C_{2m+2},\theta_{2,2,2m}:m\geq1\}$.
 \end{theorem}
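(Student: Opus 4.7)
The plan is to dispatch both directions after two preliminary reductions. First, I would reduce to connected graphs, since 2-choosability is decided componentwise. Second, I would show that $G$ is 2-choosable if and only if its core is: a pendant vertex $v$ with unique neighbor $w$ and a 2-element list always has a free color once $w$ has been colored, so degree-1 vertices may be pruned without altering 2-choosability. Hence throughout the remainder of the argument I may assume $G$ is connected with minimum degree at least two.

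For the sufficiency direction, I verify that every $H \in \mathcal{C}$ is 2-choosable. The case $H = K_1$ is immediate. For an even cycle $H = C_{2m+2}$ with 2-list assignment $L$, I split into two subcases: either all lists are identical, in which case a proper 2-coloring using the common list works since the cycle is bipartite; or some consecutive pair $v_i v_{i+1}$ has $L(v_i) \neq L(v_{i+1})$, and I color $v_i$ with a color in $L(v_i) \setminus L(v_{i+1})$ and propagate greedily around the cycle, the asymmetry at the chosen edge ensuring consistent closure. For $H = \theta_{2,2,2m}$, I perform a case analysis on the lists at the two degree-3 vertices $u, v$ together with the three middle vertices of the length-2 paths, using the length-$2m$ path as a slack path to absorb any residual constraint, essentially reducing to the even-cycle argument above.

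For the necessity direction, assume $G$ is connected with minimum degree $\geq 2$ and $G \notin \mathcal{C}$, and construct a 2-list assignment admitting no proper $L$-coloring. I split into structural cases. If $G$ is a cycle then $G = C_n$ with $n$ odd, and the constant assignment $L(v) = \{1,2\}$ forces a proper 2-coloring of an odd cycle, which is impossible. Otherwise $G$ contains a vertex of degree at least three, and a standard structural observation places inside $G$ either a theta subgraph $\theta_{l,m,n}$ with $(l,m,n) \neq (2,2,2k)$, or a ``dumbbell'' formed by two cycles joined by a (possibly trivial) path. For each such forbidden subgraph $H$, I exhibit an explicit bad 2-list assignment on $H$ that propagates forced colors along the constituent paths and yields a parity conflict at the branching vertices; extending these lists arbitrarily to the rest of $G$ then shows $G$ is not 2-choosable, since any proper $L$-coloring of $G$ restricts to one on $H$.

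The main obstacle will be the case analysis in the necessity direction. The theta-subgraph case is the most delicate: each forbidden triple $(l,m,n)$---for instance $\theta_{2,2,\mathrm{odd}}$, $\theta_{1,m,n}$, or any $\theta_{l,m,n}$ with $\min(l,m) \geq 3$---requires its own tailored list assignment producing a parity obstruction along the three internally disjoint $u$--$v$ paths. The dumbbell case is similar in spirit but must coordinate incompatible parities on two distinct cycles through the connecting path. These bespoke constructions, carried out case by case, constitute the technical heart of the Erd\H{o}s--Rubin--Taylor characterization.
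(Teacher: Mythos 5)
The paper does not prove this statement; it quotes it from Erd\H{o}s--Rubin--Taylor \cite{erdos}, so your proposal has to be measured against the classical argument. Your preliminary reductions (work componentwise, prune degree-one vertices to pass to the core) and your sufficiency direction are in line with that argument and are essentially correct. The gap is in the necessity direction. The structural dichotomy you invoke --- that a connected graph of minimum degree at least two which is neither a cycle nor in $\mathcal{C}$ must contain either a theta subgraph $\theta_{l,m,n}$ with $(l,m,n)$ not of the form $(2,2,2k)$ or a dumbbell --- is false. The graph $K_{2,4}=\theta_{2,2,2,2}$, and more generally $\theta_{2,2,2,t}$ with $t\geq 2$ even, is a counterexample: all of its cycles are even; every cycle passes through both branch vertices, so it contains no pair of cycles meeting in at most one vertex and hence no dumbbell; and every theta subgraph it contains has the branch vertices $u,v$ and is of the form $\theta_{2,2,2}$ or $\theta_{2,2,t}$, i.e.\ of the allowed shape $\theta_{2,2,2k}$. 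Yet $\theta_{2,2,2,t}$ is not $2$-choosable, since it is its own core and does not lie in $\mathcal{C}$; indeed the present paper relies on the non-$2$-choosability of $K_{2,4}$ in its main reduction.

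Consequently your case analysis cannot close: it never produces a bad $2$-list assignment for a graph whose only ``forbidden'' feature is a $\theta_{2,2,2,t}$ subgraph. Rubin's proof handles exactly this by working with \emph{five} forbidden configurations --- (a) an odd cycle, (b) two vertex-disjoint even cycles joined by a path, (c) two even cycles meeting in exactly one vertex, (d) $\theta_{a,b,c}$ with $a\neq 2$ and $b\neq 2$, and (e) $\theta_{2,2,2,t}$ with $t\geq 2$ --- proving both that each admits a bad $2$-list assignment and that every connected core of minimum degree at least two lying outside $\mathcal{C}$ contains one of the five; this list is reproduced in Section~4 of the paper. Your proposal omits configuration (e) together with the corresponding containment argument, so the necessity direction as sketched would fail on precisely the graphs $\theta_{2,2,2,2m}$. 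The remainder of the outline (the odd-cycle case, the greedy colouring of even cycles with a chosen asymmetric edge, the case analysis for $\theta_{2,2,2m}$) is sound.
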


The other known problems which are similar to the above mentioned problems are feedback vertex set, independent feedback vertex set and minimum (independent) feedback vertex set problems. A feedback vertex set $A$ of a graph $G$ is a subset of $V_G$ such that $G[V_G \setminus A]$ is a forest. The problem of finding such a set $A$ of minimum cardinality is known as minimum feedback vertex set problem and is well studied in the literature(see \cite{becker1996optimization,festa1999feedback,speckenmeyer1988feedback}). 
Further, if we impose the condition on the set $A$ to be an independent set, then it is known as minimum independent feedback vertex set problem (also known as Near Bipartite problem) \cite{Misra,agrawal2017improved}.
In \cite{diam4,BONAMY}, the decision problem for \emph{Near Bipartite}
was shown to be $\mathbb{NP}$-complete for graphs with diameter $3$. Furthermore, Bonamy et. al. \cite{BONAMY} proved that for bipartite planar graphs, the minimum independent feedback vertex set problem does not admit a polynomial time algorithm that approximates within a factor of $n^{1-\epsilon}$ for any fixed $\epsilon>0$, unless $\mathbb{P=NP}$ \cite{TCS}. 

\subsection{Our contribution}
If a graph $G$ is near bipartite, then it is also near 3-choosable. It is known that if $G$ is a graph of diameter at most 2 then near bipartiteness of $G$ can be tested in polynomial time. Therefore, \texttt{Near-3-Choosability} is in $\mathbb{P}$ for graphs of diameter at most 2. However, in this paper we prove that \texttt{Near-3-Choosability} is $\mathbb{NP}$-complete for graphs with diameter 3. 
Our main result shows that \texttt{Near-$3$-Choosability} is $\mathbb{NP}$-complete, for $\triangle$-free $4$-colorable graphs of diameter 3, hereby generalizing \cite[Theorem 3.6]{diam4}. The proof is motivated from the construction given in \cite[Theorem 1]{BONAMY} for graphs of diameter at least $3$. As a consequence, we obtain that decision problem for $3$-choosability of $\triangle$-free $4$-colorable graphs having diameter $3$ is $\mathbb{NP}$-complete.

 It is easy to see that bipartite graphs are indeed near $3$-choosable. This leads to the \texttt{Min-Near-$3$-Choosability} i.e. minimizing the size of the independent set in the near $3$-choosable decomposition. We prove that the \texttt{Min-Near-$3$-Choosability} does not admit a polynomial time algorithm in the factor of $n^{1-\epsilon}$ even for bipartite planar graphs.
 
We consider the complexity of \texttt{Min-2-choosable-Del} which is a relaxed version of \texttt{Min-Near-3-Choosability}. Given a graph $G$, in \texttt{Min-2-choosable-Del} it is asked to find a set $A \subseteq V_G$ of minimum cardinality such that $G[V_G\setminus A]$ is 2-choosable. We prove that it is $\mathbb{NP}$-complete and approximable within a factor of $O(\log |V_G|)$.

\section{Complexity of \texttt{Near-$3$-Choosablility}}
In this section, we will prove that \texttt{Near-$3$-Choosability} is $\mathbb{NP}$-complete for $\triangle$-free, 4-colorable graphs with diameter 3. This proof is a polynomial time reduction from \texttt{3-SAT} which is known to be $\mathbb{NP}$-complete \cite{garey1979computers}. Given a 3CNF formula $\varphi = C_1 \vee C_2 \vee \cdots \vee C_k$ over $n$ boolean variables $x_1, x_2, \cdots, x_n$, in  \texttt{3-SAT} it is required to decide whether there exists a truth assignment on these $n$ variables that satisfies $\varphi$ (with at least one true literal in each clause).

\begin{theorem}
 \texttt{Near-$3$-Choosability} is $\mathbb{NP}$-complete for graphs which are $\triangle$-free, 4-colourable and diameter 3.
\end{theorem}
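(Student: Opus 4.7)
The plan is first to show membership in $\mathbb{NP}$ and then to reduce from \texttt{3-SAT}. Membership follows directly from Theorem~\ref{thm:2choosablethm}: given an independent set $A\subseteq V_G$ as certificate, one checks in polynomial time that $A$ is independent, computes the core of $G[V_G\setminus A]$ by iteratively deleting degree-one vertices, and tests whether the result lies in $\mathcal{C}=\{K_1,C_{2m+2},\theta_{2,2,2m}\}$; each step is polynomial.

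For the hardness, I would start from a $3$CNF formula $\varphi$ and build a graph $G$ whose near-$3$-choosability encodes the satisfiability of $\varphi$, adapting the diameter-$3$ gadgetry of~\cite{BONAMY}. For each variable $x_i$ I attach a variable gadget exposing two literal vertices $v_i$ and $\bar v_i$ arranged so that in any near-$3$-choosable decomposition $(A,B)$ exactly one of $v_i,\bar v_i$ lies in $A$, encoding a truth assignment. For each clause $C_j=(\ell_1\vee \ell_2\vee \ell_3)$ I use the hexagon-plus-chord gadget depicted in the figures, namely the theta graph $\theta_{1,3,3}$, with its three independent ``connector'' corners identified with the literal vertices $\ell_1,\ell_2,\ell_3$. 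The crucial observation is that $\theta_{1,3,3}$ is itself not $2$-choosable (its core equals itself and is absent from $\mathcal{C}$), whereas deleting any one of the three connectors leaves a graph whose core lies in $\mathcal{C}$ and is therefore $2$-choosable. To enforce diameter $3$ I would add a small number of carefully placed adjunct vertices wired so that every pair of vertices of $G$ is within three edges of each other, and I would choose the variable and adjunct gadgets to admit a proper $4$-coloring that extends the clause gadget's natural bipartition.

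For the forward direction, a satisfying assignment $\tau$ of $\varphi$ yields $A$ by putting $v_i\in A$ if $\tau(x_i)=\text{true}$ and $\bar v_i\in A$ otherwise, supplemented by the auxiliary vertices the variable and adjunct gadgets require. Since every clause is satisfied, each hexagon loses at least one connector, and the remainder reduces locally to a $2$-choosable subgraph; a global argument then identifies the core of $G[V_G\setminus A]$ as a disjoint union of members of $\mathcal{C}$. Conversely, given any near-$3$-choosable decomposition $(A,B)$, the variable gadget forces $|A\cap\{v_i,\bar v_i\}|=1$ for every $i$ (defining a Boolean assignment), and the non-$2$-choosability of $\theta_{1,3,3}$ forces at least one literal vertex of each clause to lie in $A$, giving a satisfying assignment for $\varphi$.

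The main obstacle is coordinating the diameter-$3$ requirement with the remaining structural conditions. Adjunct vertices used to shorten distances are prone to creating triangles, destroying $4$-colorability, or---most dangerously---providing alternative completions that render $G[V_G\setminus A]$ $2$-choosable even when some clause is unsatisfied, which would break the reduction. The delicate work is therefore to design the adjunct structure so that it simultaneously guarantees $3$-step reachability between every pair of vertices, avoids triangles, admits a proper $4$-coloring, and preserves the forcing property of the $\theta_{1,3,3}$ clause gadget; this is where the diameter-$3$ construction of~\cite{BONAMY} must be adapted to the weaker $2$-choosability requirement rather than the original forest requirement.
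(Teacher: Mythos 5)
Your NP-membership argument and the choice of \texttt{3-SAT} as the source problem match the paper, but the hardness construction you sketch has a genuine gap at its core: the clause gadget does not force what you claim it forces. You assert that ``the non-$2$-choosability of $\theta_{1,3,3}$ forces at least one literal vertex of each clause to lie in $A$.'' It does not. Non-$2$-choosability only forces \emph{some} vertex of the gadget into $A$, and in $\theta_{1,3,3}$ the deletion of \emph{any} single vertex---in particular the degree-$3$ hub or an internal path vertex, not just one of your three independent connectors---leaves a graph whose core is $K_1$ or $C_4$, hence $2$-choosable. So a decomposition can always ``pay'' inside the gadget without touching any literal vertex, and an unsatisfiable formula need not yield a non-near-$3$-choosable graph. (In the paper's Section~3 this is patched by attaching forbidden gadgets that make deleting the internal vertices \emph{costly}, but that only works for the optimization version; for the decision problem the forcing must be structural.) The paper instead uses a $17$-vertex non-bipartite constraint graph $P$ whose designed property (Lemma~\ref{lemma-constraint-graph}) is the \emph{negation} you need: the three designated vertices cannot \emph{all} lie in the independent part, yet every proper subset of them extends to a valid decomposition---and the truth encoding is inverted accordingly (a literal is false iff its vertex lies in $A$).

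The second gap is that the two hardest constraints of the theorem---diameter $3$ together with triangle-freeness and $4$-colorability---are exactly the parts you leave as ``carefully placed adjunct vertices,'' and likewise your variable gadget forcing $|A\cap\{v_i,\bar v_i\}|=1$ is asserted but never constructed. These are not finishing touches; they are the bulk of the paper's proof. The paper achieves all of them simultaneously with one mechanism: every row of each clause gadget is $K_{17,17}$ minus a perfect matching of true/false ``mates,'' rows are tied together across gadgets, and a dominating row plus an apex vertex $d_0$ yields diameter $3$ without creating triangles. The converse direction then requires a nontrivial analysis (using the non-$2$-choosability of $K_{2,4}$ and $\theta_{3,3,3}$, plus the non-bipartiteness of $P$ via a $3$-coloring argument) to show that in any valid decomposition each row contributes either all its true vertices or all its false vertices to $A$---which is precisely the variable-forcing property you postulate. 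As written, your proposal identifies the right obstacles but resolves none of them, and its central forcing claim is false for the gadget chosen.
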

\begin{proof}
Given an instance $\varphi$ of \texttt{3-SAT}, in polynomial time, we construct a graph $H_{\varphi}$ which is $\triangle$-free, 4-colorable and has diameter 3, such that $\varphi$ is satisfiable if and only if $H_{\varphi}$ has a near-3-choosable decomposition $(A,B)$.

Let $\varphi$ be an instance of a \texttt{3-SAT} with $k$ clauses $\iC_1,\iC_2,\dots,\iC_k$ over the variables $x_1,x_2,\dots,x_n$. We may assume that each clause consists of exactly three distinct literals.

First we define a constraint graph $P=(V_P, E_P)$ consisting of 17 vertices $V_P=\{v_1, v_2, v_3, w_1, \cdots, w_{14}\}$ as described in Fig. \ref{fig:gadget}.

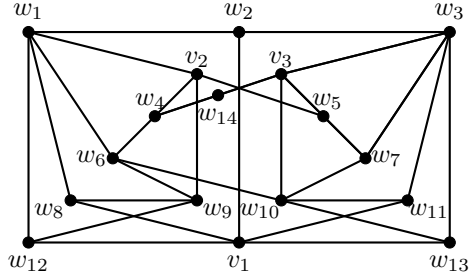
\begin{figure}[h]
    \centering
    \begin{tikzpicture}[scale=1.4]
        \draw[fill=black] (0,0) circle (1.5pt); 
        \draw[fill=black] (0,2) circle (1.5pt); 
        \draw[fill=black] (2,0) circle (1.5pt); 
        \draw[fill=black] (2,2) circle (1.5pt);
        \draw[fill=black] (0.4,0.4) circle (1.5pt);
        \draw[fill=black] (0.8,0.8) circle (1.5pt);
        \draw[fill=black] (1.2,1.2) circle (1.5pt);
        \draw[fill=black] (1.6,1.6) circle (1.5pt);
        \draw[fill=black] (1.6,0.4) circle (1.5pt);  
        \draw[fill=black] (1.8,1.4) circle (1.5pt); 
                 
        \draw[fill=black] (4,0) circle (1.5pt);
        \draw[fill=black] (4,2) circle (1.5pt);
        \draw[fill=black] (2.4,1.6) circle (1.5pt);
        \draw[fill=black] (2.8,1.2) circle (1.5pt);
        \draw[fill=black] (3.2,0.8) circle (1.5pt);
        \draw[fill=black] (3.6,0.4) circle (1.5pt);
        \draw[fill=black] (2.4,0.4) circle (1.5pt);

        \node at (1.8,1.2) {$w_{14}$};
        \node at (0,-0.2) {$w_{12}$};
        \node at (0,2.2) {$w_1$};
        \node at (2,-0.2)  {$v_1$};
        \node at (2,2.2) {$w_2$};
        \node at (0.2,0.3) {$w_8$};
        \node at (0.6,0.8) {$w_6$};
        \node at (1.16,1.34) {$w_4$};
        \node at (1.6,1.74)  {$v_2$};
        \node at(1.8,0.3) {$w_9$};                  
                 
        \node at (4,-0.2) {$w_{13}$};
        \node at (4,2.2) {$w_3$};
        \node at (2.4,1.74)  {$v_3$};
        \node at (2.84,1.34) {$w_5$};
        \node at (3.4,0.8) {$w_7$};
        \node at (3.8,0.3) {$w_{11}$};
        \node at (2.2,0.3) {$w_{10}$};

        
        \draw[thick] (0,0) -- (2,0) -- (2,2) -- (0,2) -- (0,0);
        \draw[thick] (2,2) -- (4,2) -- (4,0) --(2,0);
        \draw[thick] (0.4,0.4) -- (0,2) --(0.8,0.8) -- (1.2,1.2)--(1.6,1.6) --(0,2);
        \draw[thick] (1.6,0.4) -- (0.4,0.4);
         \draw[thick] (1.6,0.4) -- (0.8,0.8);
          \draw[thick] (1.6,0.4) -- (1.6,1.6);
           \draw[thick] (1.6,1.6) -- (2.8,1.2);

         \draw[thick] (2.4,1.6) -- (4,2)--(3.2,0.8)--(2.8,1.2)--(2.4,1.6)--(1.2,1.2);
         \draw[thick] (2.4,1.6)--(2.4,0.4) -- (3.6,0.4)--(4,2);
         \draw[thick] (2.4,1.6) -- (3.2,0.8)--(4,2)--(2.4,1.6)--(2.4,1.6)--(1.2,1.2);
         \draw[thick] (0.8,0.8)--(2.4,0.4) -- (3.2,0.8);
         \draw[thick] (2.4,0.4) -- (4,0);
         \draw[thick] (1.6,0.4) -- (0,0);
         \draw[thick,] (0.4,0.4) --  (2,0) -- (3.6,0.4);
         
    \end{tikzpicture}
    \caption{Constraint graph $P$}
    \label{fig:gadget}
\end{figure}

\begin{lemma}{\label{lemma-constraint-graph}}
    Consider the subset $U = \{v_1,v_2,v_3\}$ of $V_P$. Then we have the following:
    \begin{enumerate}[label=(\alph*)]
        \item $P$ is not a bipartite graph.
        \item If $(A,B)$ is a near-$3$-choosable decomposition of $P$, then $U \not\subseteq A$.
        \item Every proper subset $I$ of $U$ can be extended to an independent set $A$ such that $A \cap U = I$ and  $(A,V_P \setminus A)$ is a near-$3$-choosable decomposition of $P$.
    \end{enumerate}
\end{lemma}

\begin{proof}
    \begin{proofcnt}
        \proofcount{$P$ is not bipartite as it contains an odd cycle $\langle v_2, w_5, v_3,$  $w_{14}, w_4\rangle$ of 5 vertices .}

        \noindent 
        \proofcount{Let $(A,B)$ be a near-$3$-choosable decomposition of $P$. Suppose on the contrary that $U \subseteq A$. Note that $\overline{U}:=\{v_1,v_2,v_3,w_6,w_{7}\}$ is the unique maximal independent set containing $U$. 
        However, the core of the induced subgraph $P[V_P\setminus\overline{U}]$ does not lie in $\{K_1,C_{2m+2},\theta_{2,2,2m}:m\geq1\}$ contradicting that $(A,B)$ is a near-$3$-choosable decomposition of $P$. }

        \noindent
        \proofcount{The proper subsets $\emptyset,\{v_1\},\{v_2\},\{v_3\},\{v_1,v_2\},\{v_1,v_3\}$ and $\{v_2,v_3\}$ can be respectively extended to independent sets
        \begin{align*} 
       		&\{w_1,w_3,w_4,w_5,w_9,w_{10}\}, \{v_1,w_1,w_3,w_4,w_5,w_9,w_{10} \}\\ &\{v_2,w_2,w_6,w_7,w_8, w_{11},w_{12}, w_{13}\}, \{v_3,w_2,w_6,w_7,w_8,w_{11},w_{12},w_{13}\},\\
       		&\{v_1,v_2,w_3,w_{10},w_{14}\}, \{v_1,v_3,w_1,w_7,w_9\} \text{ and } \{v_2,v_3,w_2,w_6,w_7,w_{12},w_{13}\}.
 	    \end{align*}
        Furthermore, each of these extended subsets satisfies the required condition. }
    \end{proofcnt}  
\end{proof}

For each clause $C_s$, we define a clause gadget $\mathcal{J}_s$ which is an array of $n+14k +1$ rows, and $17$ columns. In each row except the last one, every position consists of exactly two vertices, which are referred to as the true vertex and the false vertex. We say that these pairs of vertices are mates. Pictorially (see Fig. \ref{fig:npcomplete}), the true and false vertices are drawn in black and white, respectively. Each such row is isomorphic to $K_{17,17}$ minus a perfect matching consisting of $17$ mate edges. 
The first $n$ rows form the variable block wherein the $i$th row corresponds to the variable $x_i$ in $\varphi$. The next $14k$ rows are divided into $k$ blocks, each one consisting of $14$ rows. We will refer these blocks as ${J}_{s,1}, {J}_{s,2}, \cdots, {J}_{s,k}$. In the last row, each column consists of a single vertex, which is adjacent to every other vertex in its column. We will denote this row as dominating row. Now it remains to introduce further edges of the constraint graph $P$. Let the literals of $C_s$ be $l_{s1}, l_{s2}, l_{s3}$.  We choose the first three columns of the variable block of $\mathcal{ J}_s$ to represent these literals. If $l_{sr}$ is the literal associated with the variable $x_i$, then we choose $V_r^s$ as the vertex from $i$th row and $r$th column, and choose the true vertex if the literal is positive, otherwise the false vertex. For $j \in \{4, 5, \cdots, 17\}$, let $W_j^s$ be the true vertex in the $(j-3)$th row and $j$th column in the clause block ${J}_{s,s}$. We have identified 17 vertices from the clause gadget $\mathcal{J}_s$ as $\{V_1^s, V_2^s, V_3^s, W_1^s, \cdots, W_{14}^s\}$ and introduce the 31 edges of constraint graph $P$ such that the induced graph (of these 17 vertices) is isomorphic to $P$.

Now, we construct the  graph $H_{\varphi}$ by merging the $\mathcal{J}_s$ graphs as follows.\\ 
$(i)$ Make a disjoint union of clause graphs $\mathcal{J}_s$, $1 \leq s \leq k$. \\
$(ii)$ Introduce the edges from each true vertex of each clause graph to each false vertex in the same row of other clause graphs. \\
$(iii)$ Introduce a new additional vertex $d_0$ and make it adjacent to each vertex in the last row of each clause gadget. 

It can be observed that each row in $H_{\varphi}$ has $17k$ columns.
We denote $Y_i$ and $Z_i$ as the set of black vertices and white vertices, respectively, in $i^{th}$ row, that is, $Y_i =\{y_{i1}, y_{i2}, \cdots, y_{i(17k)}\}$ and $Z_i =\{z_{i1}, z_{i2}, \cdots, z_{i(17k)}\}$. Further, we denote the vertices in the dominating row as $d_1,d_2,...,d_{17k}$.

\begin{figure}
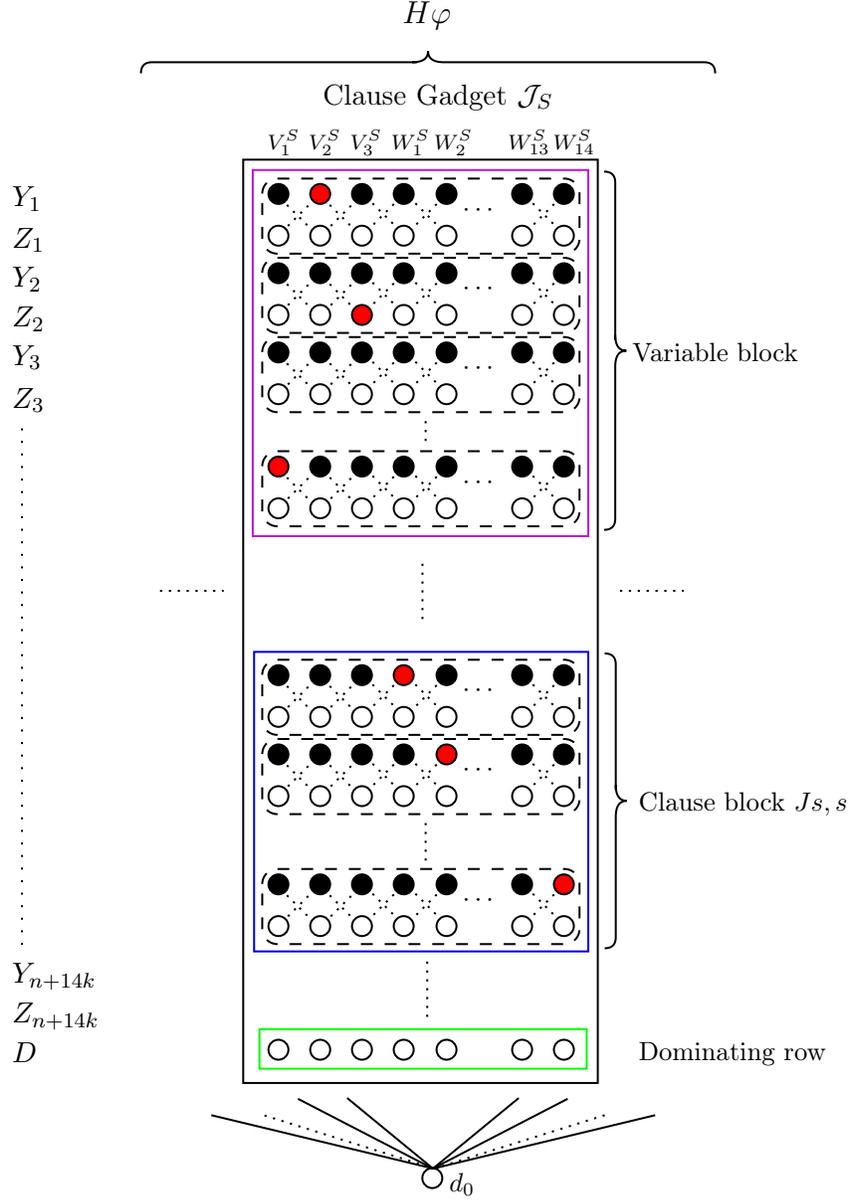

    \centering
    \tikzset{every picture/.style={line width=0.75pt}} 



    \caption{ An illustration of the graph $H_{\phi}$ is presented, with a particular focus on the $s^{th}$ clause gadget.} \label{fig:npcomplete}
\end{figure}

\begin{lemma}
The graph $H_{\varphi}$ is $\triangle$-free, 4-colorable and of diameter 3.
\end{lemma}
\begin{proof}
It can be observed that $H_{\varphi}$ is $\triangle$-free. $H_{\varphi}$ is of diameter 3, because each clause gadget $\mathcal{ J}_s$ along with vertex $d_0$ (even without the constraint graph $P$) is of diameter 3.
We define a proper 4-coloring of the graph $H_{\varphi}$ as follows. For each $i\in \{1,2,\dots,n\}$, assign color $1$ to all vertices in $Y_i$ and color $2$ to all vertices in $Z_i$. This is a proper $2$-coloring on the induced graph on the variable block of $H_\varphi$. Now we extend this coloring to clause blocks of $H_\varphi$. It is easy to observe that the set $S=\{w_1,w_3,w_4,w_9,w_{10}\}$ is the maximal independent set of the constraint graph $P$ for which each vertex has exactly one neighbor in the set $U=\{v_1,v_2,v_3\}$. For $i\in \{n+1,2,\dots,n+14k\}$, there is exactly one vertex $y_{ij}$ in $Y_i$, which is identified with one of the vertices $w_1,w_2,\dots,w_{14}$ in the constraint graph $P$. If the vertex $y_{ij}$ is identified with one of the vertices in $S$, then $y_{ij}$ is adjacent to exactly one vertex, say $v$, in the variable block. In this case, color all vertices of $Y_i$ color $1$ and all vertices of $Z_i$ color $2$ if the vertex $v$ is colored $2$. Otherwise, color all the vertices of $Y_i$ color $2$ and the vertices $Z_i$ color $1$. On the other hand, if the vertex $y_{ij}$ is identified with a vertex of $P$ which is not in $S$, then color the vertices of $Y_i$ color $3$ and color the vertices of $Z_i$ color $2$. Now color all the vertices of the dominating row color $4$ and $d_0$ by any one of the three colors $1,2$ or $3$. By the construction of $H_\varphi$, this is a proper $4$-coloring of $H_\varphi$.  
\end{proof}

Suppose that $\varphi$ has a satisfying truth assignment $\tau$. We now proceed to construct a decomposition $(A, B)$ from the truth assignment $\tau$. Let the vertex $d_0$ lie in $A$, and its neighbors in $B$. If $\tau(x_i)$ is false, then let $B$ contain all the vertices $Z_i$. Otherwise, let the vertices in $Y_i$ lie in $B$. In either case, let $A$ contain the mates of the vertices lying in $B$. Consider the induced graph $P$ in each clause gadget $\mathcal{ J}_s$. The vertices $V^S_1, V^S_2, V^S_3$ associated with the variables of $C_s$ either lie in $A$ or $B$, wherein at most two of them that correspond to a false literal lie in $A$ (since $C_s$ is satisfiable under the truth assignment $\tau$ ). Let $U$ be such a set of vertices associated with literals that are false in $C_s$. By Lemma \ref{lemma-constraint-graph}, we extend the vertex set $U$ to an independent set (by choosing the corresponding vertices from the clause block) that gives near $3$-choosable decomposition of $P$. Whenever a vertex of a clause block $J_{s,s}$ is assigned to $A$ or $B$, all the other true vertices in the same row of $H_{\varphi}$ are assigned to the same set, and their mates are assigned to the other set. This process assigns every vertex of  $H_{\varphi}$ to exactly one of $A$ or $B$.
This construction implies that $A$ is an independent set. 

Now we show that the core of the subgraph induced by $B$ lies in $\mathcal{ C}$. Note that the vertices in the bottom-most row, which lie in $B$ have an edge incident to every vertex in the same column. Furthermore, from each row, either every true vertex or every false vertex lie in $B$. This implies that the cycles (if exist) in $H_{\varphi}[B]$, must lie in the same clause block. Let $\mathcal{ J}_s[B]$ be the subgraph induced by $B$ in the clause gadget $\mathcal{ J}_s$. The true and false vertices not lying in the constraint graph have degree 1 in $\mathcal{ J}_s[B]$ due to the edge incident to the dominating row. Also, each vertex in the dominating row has at most one vertex of degree more than 1 in $\mathcal{ J}_s[B]$(since it has only one neighbor in the constraint graph). This implies that the core of $\mathcal{ J}_s[B]$ is equal to the core of the induced subgraph of the constraint graph in each clause gadget. This implies that connected components of the core of $H_{\varphi}[B]$ must lie in $\mathcal{ C}$.

Conversely, assume that $H_{\varphi}$ is near $3$-choosable. Let $(A,B)$ be the decomposition of $H_{\varphi}$ where $A$ is an independent set and $H_{\varphi}[B]$ is $2$-choosable. We will construct a satisfying truth assignment $\tau$ of $\varphi$. Our first step is to observe some properties of the graph $H_{\varphi}$ under the assumption that $(A, B)$ is a near $3$-choosable decomposition of $H_{\varphi}$.
 
At least one vertex in $Y_i$ or $Z_i$ belongs to $A$, for each $i \in \{1,2,\dots,n+14k\}$. Otherwise, $Y_i \cup Z_i \subset B$, but the induced graph on these vertices is $K_{17k,17k}$ minus a perfect matching which is not $2$-choosable. Now, we prove that if a vertex in $Y_i$ is in $A$, then $Y_i$ can have at most one vertex from $B$ (a similar statement holds for $Z_i$).  Suppose that there exists $y_{ij},y_{il},y_{im} \in Y_i$ such that $y_{ij} \in A$ and $y_{il},y_{im}\in B$. Then all the vertices in $Z_i$ except $z_{ij}$ belong to $B$ since the vertex $y_{ij} \in A$. This implies that $\{y_{il},y_{im} \} \cup Z_i \setminus \{z_{ij}\}$ is a subset of $B$ and hence the induced graph $H_{\varphi}[\{y_{il},y_{im} \} \cup Z_i \setminus \{z_{ij}\}]$ is $2$-choosable. This is a contradiction as it contains $K_{2,4}$ as a subgraph which is not 2-choosable. Therefore, for any  $i \in \{1,2,\dots,n+14k\}$, if $Y_i$ or $Z_i$ contains a vertex from $A$, they cannot contain more than one vertex from $B$.

Suppose that there exists an $i \in \{1,2, \dots, n+14k\}$ such that one of $Y_i$ or $Z_i$ contains a vertex in $A$ and a vertex in $B$ in the decomposition of $H_\varphi$. Without loss of generality, assume that there exists $y_{ij}$ and $y_{il}$ in $Y_i$ such that $y_{ij} \in A$ and $y_{il} \in B$. Since $Y_i$ can not contain more than one vertex from $B$, we have $Y_i\setminus\{y_{il}\} \subset A$. This implies that all vertices in $Z_i$ lie in $B$.
If the vertex $d_0\in B$, then $\{y_{il},d_0\}\cup Z_i \cup \ D\setminus \{d_l\} \subseteq B$. This contradicts that $H_{\varphi}[B]$ is $2$-choosable  because it contains $\theta_{3,3,3}$ as a subgraph induced by vertices $y_{il}, d_0,$ some three vertices from $Z_i$ (other than $z_{il}$) and their corresponding three vertices from the dominating row. 
Therefore $A$ must contain the vertex $d_0$. Consider the following proper $3$-coloring on $H_{\varphi}$ where the vertices in $A$ are colored red, and the vertices in $B$ are colored blue and green (since $H_{\varphi}[B]$ is $2$-choosable). Without loss of generality, assume that the vertex $y_{il}$ is colored blue. Since the induced graph on $Y_i \cup Z_i$ is $K_{17,17}$ minus a perfect matching, all vertices except $z_{il}$ get the color green. This shows that all the vertices in the dominating row except $d_l$ are colored blue. Thus all the vertices in $H_{\varphi}$ other than the vertices in the $l^{\mathrm{th}}$ column, dominating row and $d_0$, are colored green or red. Then at least one copy of the constraint graph $P$ admits a proper coloring with colors red and green only. This is a contradiction because the constraint graph is not bipartite (by Lemma \ref{lemma-constraint-graph}). Therefore, for any $i \in \{1,2, \cdots, n+14k\}$, either $Y_i$ or $Z_i$ is contained in $A$.

Now we define a truth  assignment $\tau$ of $\varphi$ as follows. For every $i \in \{1,2, \cdots, n\}$, we set $x_i=0$ in $\tau$ if all vertices in $Y_i$ is in $A$; otherwise if all vertices of $Z_i$ are in $A$ then we set $x_i=1$ in $\tau$. We prove that $\tau$ is a satisfying assignment of $\varphi$. Consider a clause $C_s=(l_{s1} \vee l_{s2} \vee l_{s3})$ of $\varphi$, where $l_{sr} \in \{ x_{i},\overline{x_i}\}$ for some $i \in \{1,2, \cdots, n\}$ and $r \in \{1,2,3\}$. This clause $C_s$ corresponds to the clause gadget $\mathcal{J}_s$ in $H_\varphi$. The vertices of this clause gadget is also partitioned into $A'$ and $B'$ where $A'\subset A$ and $B'\subset B$, since $(A,B)$ is near $3$-choosable decomposition of $H_{\varphi}$.  By Lemma \ref{lemma-constraint-graph}, not all the three vertices $V_1^s,V_2^s,V_3^s$ lie in the set $A$. Also by the construction of $H_\varphi$ and definition of $\tau$, the literal $l_{sr}=0$ under $\tau$ if and only if the vertex $V_r^s$ is in $A$, for $r\in\{1,2,3\}$. Since not all the vertices $V_1^s,V_2^s,V_3^s$ lie in $A$, implies that the literals $ l_{s1},l_{s2},l_{s3}$ of $C_s$ are not all false in $\tau$. So the clause $C_{s}$ is satisfied by $\tau$. Therefore $\tau$ is a satisfying truth assignment of $\varphi$.  
\end{proof}


It is an open problem to decide whether a planar triangle-free graph is near 3-choosable or not \cite{Zhu,tcommon}. However, we could able to prove that it is $\mathbb{NP}$-complete to decide near 3-choosability of a 4-colorable, triangle-free graph of diameter 3.

\section{Inapproximability of \texttt{Min-Near-3-Choosability}}
This section focuses on the approximability of \texttt{Min-Near-3-Choosability}, a problem that aims to find an independent set $A$ in a graph $G$ such that $G[V_G \setminus A]$ is $2$-choosable. However, not all graphs have a feasible solution for this problem, as some graphs do not have an independent set $A$ that satisfies the conditions. Nevertheless, \texttt{Min-Near-3-Choosability} is well-defined for bipartite graphs, which are always near $3$-choosable. Given a bipartite graph $G$, we can define a near $3$-choosable decomposition $(A, B)$ of $V_G$ as one where $A$ is an independent set and $G[B]$ is $2$-choosable. Here, $A$ is referred to as the \emph{near $3$-choosable deletion} set in $G$. Furthermore, if none of the proper subsets of $A$ satisfies the near $3$-choosable conditions, then $A$ is known as a \emph{minimal near $3$-choosable deletion set}.

A strong inapproximability result for \texttt{Min-Near-3-Choosability} is presented in Theorem \ref{thm-inappx}, even when restricted to planar bipartite graphs. Our proof of this theorem involves a reduction from the well-known \texttt{Planar-3-SAT}, which is the 3-satisfiability problem with the additional condition that the associated graph of a given instance $\varphi$ is planar. We denote the set of variables and clauses in $\varphi$ as $X=\{x_1, \cdots, x_n\}$ and $C=\{C_1, C_2, \cdots, C_k\}$, respectively. The bipartite graph $P_{\varphi}=(X\cup C, E)$ has vertex set $X \cup C$ with an edge $(x_i, C_j) \in E$ if and only if the variable $x_i$ appears in $C_j$. We classify such an edge $(x_i, C_j)$ as a {\it positive edge} if $x_i$ appears positively in $C_j$; otherwise, it is called a {\it negative edge}. We say that $\varphi$ is an instance of \texttt{Planar-3-SAT} if the associated graph $P_{\varphi}$ is planar, which is known to be $\mathbb{NP}$-complete \cite{lichtenstein1982planar}.
    
\begin{theorem} \label{thm-inappx}
For planar bipartite graphs,  \texttt{Min-Near-3-Choosability} can not be approximated  within a factor of $n^{1-\epsilon}$, for $\epsilon>0$, unless $\mathbb{P=NP}$.
\end{theorem}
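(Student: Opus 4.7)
The plan is a polynomial-time gap-preserving reduction from \texttt{Planar-3-SAT}. Given an instance $\varphi$ on $n$ variables and $k$ clauses and a parameter $N$ to be fixed later, the goal is to construct in polynomial time a planar bipartite graph $G=G(\varphi,N)$ of size $n'=\mathrm{poly}(n,k,N)$ satisfying a sharp gap: (i) if $\varphi$ is satisfiable then $G$ admits a near-$3$-choosable deletion set of size at most $p(n,k)$ for a fixed polynomial $p$, while (ii) if $\varphi$ is unsatisfiable then every near-$3$-choosable deletion set of $G$ has size at least $N$. Setting $N = p(n,k)^{1/\epsilon}$ forces the optimum either below $(n')^{\epsilon}$ or above $(n')^{1-\epsilon}$, which rules out an $(n')^{1-\epsilon}$-approximation unless $\mathbb{P}=\mathbb{NP}$.

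First I would build the formula-encoding part of $G$. For each variable $x_i$ introduce a planar bipartite \emph{variable gadget} with two distinguished literal terminals $t_i,\bar t_i$; using Theorem \ref{thm:2choosablethm}, the gadget is engineered so that in any near-$3$-choosable decomposition $(A,B)$ exactly one of $\{t_i,\bar t_i\}$ must lie in $A$. For each clause $C_j=\ell_{j1}\vee \ell_{j2}\vee \ell_{j3}$ attach a planar bipartite \emph{clause gadget} meeting the three corresponding literal terminals, designed so that whenever all three of them lie in $B$ the induced subgraph $G[B]$ contains a subgraph whose core falls outside $\mathcal{C}=\{K_1,C_{2m+2},\theta_{2,2,2m}:m\geq 1\}$; a concrete building block is $K_{2,4}$ or $\theta_{2,4,4}$, both planar, bipartite, and non-$2$-choosable. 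Planarity of the combined gadget comes for free from a planar embedding of $P_\varphi$ by routing each clause gadget into the face that contains the corresponding clause vertex.

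Next I would append the \emph{amplifier}: $N$ vertex-disjoint planar bipartite copies of a small penalty gadget $F$, all glued to a common interface vertex $v^\star$ of the formula part. The gadget $F$ is designed so that (a) when $v^\star\in A$, every copy of $F$ can be placed in $B$ with a core lying in $\mathcal{C}$, contributing nothing to $|A|$, whereas (b) when $v^\star\in B$, the subgraph of $G[B]$ inside $F\cup\{v^\star\}$ has a core outside $\mathcal{C}$, forcing at least one vertex of each copy of $F$ into $A$. A natural candidate is a $K_{2,4}$ with $v^\star$ placed on its two-element side, padded with a few pendants that pin down the core. The interface $v^\star$ is chosen so that (1) any satisfying assignment of $\varphi$ extends to a near-$3$-choosable decomposition with $v^\star\in A$ and $|A|\leq p(n,k)$, and conversely (2) any decomposition of the formula part with $v^\star\in A$ projects to a satisfying assignment. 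Combining (1), (2) with the amplifier gives the gap: \texttt{YES} instances have optimum $\leq p(n,k)$, \texttt{NO} instances force $v^\star\in B$ and hence $|A|\geq N$.

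\textbf{Main obstacle.} The principal difficulty will be constructing the penalty gadget $F$ so that the implication ``$v^\star\in B$ forces a deletion inside $F$'' is \emph{rigid}, i.e.\ cannot be circumvented by deleting some other vertex of $F$ that breaks its forbidden core while remaining independent from the rest of $A$. Because the characterisation of $2$-choosability via $\mathcal{C}$ is restrictive and planar bipartite graphs offer only a narrow supply of non-$2$-choosable cores, this is a delicate case analysis on what deletions inside $F$ can or cannot repair the core. A secondary obstacle is to realise $N$ disjoint copies of $F$ around a single $v^\star$ while keeping the whole graph planar: this requires a careful face-by-face embedding that distributes the copies into distinct faces incident to $v^\star$ (or inserts a planar ``fan'' from $v^\star$ to an array of $N$ copies) without introducing spurious adjacencies that could spoil either direction of the reduction.
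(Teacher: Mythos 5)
Your high-level skeleton (reduction from \texttt{Planar-3-SAT}, planar bipartite clause/edge gadgets, gap amplification via many cheap-to-satisfy copies of a penalty gadget) matches the paper's, but the architecture of your amplifier has a genuine gap. You hang all $N$ penalty copies off a \emph{single} interface vertex $v^\star$ and need the equivalence ``$v^\star$ can be placed in $A$ iff $\varphi$ is satisfiable.'' That is a global property of the formula, while the constraints available here (independence of $A$, plus the core of $G[B]$ lying in $\mathcal{C}$) are purely local: the only ways to force a single vertex into $B$ are to make it adjacent to a vertex forced into $A$, or to make it the unique repair of a forbidden core in its own neighbourhood. To let $v^\star$ detect unsatisfiability you would have to wire it to every clause gadget, which in general destroys planarity (the clause gadgets are spread over many faces of the embedding of $P_\varphi$), and even then you would need to \emph{rigidly} force specific vertices into $A$ --- which, as you note yourself under ``main obstacle,'' these gadgets cannot do, since a deletion set is free to hit a non-$2$-choosable subgraph at any of its vertices.

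The paper resolves both problems by decentralizing the amplifier. Its ``forbidden gadget'' is a root--core pair with $p+1$ pendant $4$-cycles through the core: deleting the core costs $1$, whereas putting the root into $A$ (hence the core into $B$) costs more than $p$. Copies of this gadget are attached at \emph{every} vertex whose deletion would correspond to a violation --- in particular at the three non-literal vertices $w_{j1},w_{j2},w_{j3}$ of each clause gadget and throughout the edge gadgets --- so an unsatisfied clause forces a deletion at a root and costs more than $p$ right where the violation occurs; no global funnel is needed, forcing is achieved by cost rather than by rigidity, and planarity is preserved because each amplifier is local. If you want to rescue your plan, replace the single $v^\star$ by one such local amplifier per clause (and per propagation step in the edge gadgets). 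Separately, your parameter choice $N=p(n,k)^{1/\epsilon}$ is too small: with $n'=\Theta(N)$ the NO-case lower bound $N$ only matches $(n')^{1-\epsilon}\cdot p(n,k)$ up to a constant factor instead of exceeding it, so the approximation algorithm need not separate the two cases; the paper takes amplification exponent $\lceil(2-\epsilon)/\epsilon\rceil$ precisely so that $p^{\epsilon}/q_\varphi^{1-\epsilon}\geq q_\varphi$ gives a strict separation.
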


\begin{proof}
Given an instance $\varphi$ of \texttt{Planar-3-SAT}, in polynomial time, we construct a planar bipartite graph $G_{\varphi,p}$ (integer $p$ will be defined later) corresponding to $P_{\varphi}$ by replacing its vertices and edges by suitable gadgets. 

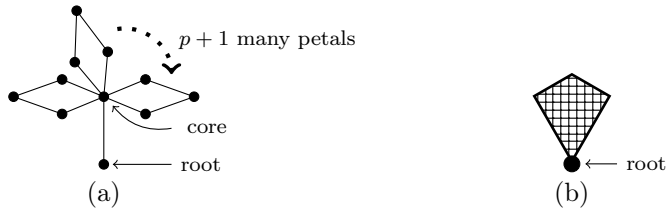
\begin{figure}[h!]
    \centering
    \begin{tikzpicture}[scale=0.6]
        \draw[fill=black] (0:0) circle (3pt);
        \draw[fill=black] (0:2) circle (3pt);
        \draw[fill=black] (-22.5:1) circle (3pt);
        \draw[fill=black] (22.5:1) circle (3pt);

        \draw[fill=black] (180:2) circle (3pt);
        \draw[fill=black] (157.5:1) circle (3pt);
        \draw[fill=black] (202.5:1) circle (3pt);

        \draw[fill=black] (130:1) circle (3pt);
        \draw[fill=black] (85:1) circle (3pt);
        \draw[fill=black] (107.5:2) circle (3pt);

        \draw[fill=black] (-90:1.5) circle (3pt);
        
        \begin{pgfonlayer}{background layer}
            \draw (-90:1.5) -- (0:0) -- (-22.5:1) -- (0:2) -- (22.5:1) -- (0,0) -- (85:1) -- (107.5:2) -- (130:1) -- (0:0) -- (157.5:1) -- (180:2) -- ( 202.5:1) -- (0:0) ;

            \draw[->,ultra thick,loosely dotted] (0.3,1.5) to[bend left=50] node[midway,right] {\footnotesize \; $p+1$ many petals} (1.5,0.5);
            \draw[<-] (0.2,-0.2) to[bend right = 30] (1.5,-0.7) node[right] {\footnotesize \;core};
            \draw[<-] (0.2,-1.5) -- (1.5,-1.5) node[right] {\footnotesize root};
            \node [below=1cm] at (0,0)
        {
            (a)
        };
            
        \end{pgfonlayer}
    \end{tikzpicture}    
    \hspace{2cm}
    \begin{tikzpicture}[scale=0.6, petal/.style={kite,draw,pattern=grid, line width=1pt}]
        \node[petal,minimum size =1cm, anchor=south] at (0,0){} ;
        \draw[fill=black] (0,0) circle(5pt);
        \draw[<-] (0.3,0) -- (1,0) node[right] {\footnotesize root};
        \node [below=3pt] at (0,0)
        {
            (b)
        };
    \end{tikzpicture}        
    \caption{Forbidden gadget}
    \label{fig:forbidding gadget}

\end{figure}

To begin, we introduce the {\it forbidden gadget} (see Fig. \ref{fig:forbidding gadget}a) which comprises a root vertex, a core vertex, and $p+1$ petals (where each petal is a $C_4$). The root vertex connects to the core vertex, and the core vertex is the sole common vertex of all $p+1$ petals. A simplified depiction of this gadget is shown in Fig. \ref{fig:forbidding gadget}b for clarity. Note that the root vertex and core vertex cannot belong to an independent set simultaneously. The forbidden gadget is not $2$-choosable, and thus any minimal near 3-choosable deletion set must include at least one vertex from the gadget. If the core vertex is present in a minimal near 3-choosable deletion set, the other nodes in this gadget will not be included. On the other hand, if the root vertex is included, then $p$ vertices (one each from distinct petals) must be included in the near 3-choosable deletion set.


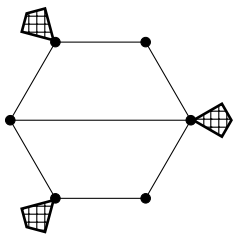
\begin{figure}[h!]
    \centering
        \begin{tikzpicture}[scale=0.6,petal/.style={kite,draw,pattern=grid, line width=1pt}]
           \newdimen\R
            \R=2cm
            \foreach \x in {0,60,120,...,300}
                \draw[fill=black] (\x:\R) circle (3pt);
            \begin{pgfonlayer}{background layer}
                \draw (0:2) \foreach \x in {60,120,...,360} {-- (\x:\R) }; 
                \draw (0:2) -- (180:2);
            \end{pgfonlayer}
             \put(-20,0){\makebox(-20,0)[r]{$c_{j1}$}};
             \put(15,23){\makebox(15,23)[r]{$c_{j2}$}};
             \put(15,-25){\makebox(15,-25)[r]{$c_{j3}$}}; 
            
            \node[petal,minimum size =0.2cm, anchor=south,rotate=270] at (0:2){} ;
	        \node[petal,minimum size =0.2cm, anchor=south,rotate=45] at (120:2){} ;
            \node[petal,minimum size =0.2cm, anchor=south,rotate=135] at (240:2){} ;
        \end{tikzpicture}
    \caption{Clause Gadget}
    \label{fig:clause gadget}
\end{figure}
The clause gadget is defined in Fig. \ref{fig:clause gadget}. If $C_j$ is a clause in $\varphi$ then its main six nodes are denoted by $c_{j1}, c_{j2}, c_{j3}, w_{j1}, w_{j2}, w_{j3}$. It can be observed that this gadget is not 2-choosable and at least one of these six vertices must be included in a minimal near 3-choosable deletion set.

\begin{figure}[h!]
	\centering
\begin{tikzpicture}[scale=0.6, petal/.style={kite,draw,pattern=grid, line width=1pt}]
	\draw[fill=Blue, opacity=1] (0,0) circle (4pt) node[anchor=north]{$c_{jr}$}; 
	\draw[fill=black] (-2,-0.5) circle (3pt);
	\draw[fill=black] (-2,1.5) circle (3pt);
    \draw[fill=black] (2,-0.5) circle (3pt);
	\draw[fill=black] (2,1.5) circle (3pt);
	\draw[fill=red] (0,2) circle (4pt);
    \draw[fill=black] (2,2) circle (3pt);
	\draw[fill=black] (2,4) circle (3pt);
	\draw[fill=black] (2,6) circle (3pt);
	\draw[fill=Blue] (0,4) circle (4pt);
	\draw[fill=red] (0,6) circle (4pt);
	\draw[fill=black] (-2,6) circle (3pt);
	\draw[fill=Blue] (0,8) circle (4pt) node[anchor=south]{$x_{i}$};
	\draw[fill=black] (-2,4) circle (3pt);
	\draw[fill=black] (2,8) circle (3pt);
	
	\begin{pgfonlayer}{background layer}
    \draw (0,0) -- (-2,-0.5) -- (-2,1.5) -- (0,2) -- (2,1.5) -- (2,-0.5) -- (0,0);
    \draw (0,2) -- (2,2) -- (2,4) -- (0,4) -- (0,6) -- (-2,6);
	\draw (0,6) -- (2,6) -- (2,8) -- (0,8) -- (0,6) -- (0,4) -- (0,2) -- (0,0);
	\draw (-2,6) -- (-2,4) -- (0,4); \draw (2,4) -- (2,6);
    \draw (-2,6) to[bend right =90] (0,2) ;
    \end{pgfonlayer}
	
	\node[petal,minimum size =0.2cm, anchor=south,rotate=115] (T)at (-2,-0.5){} ;
	\node[petal,minimum size =0.2cm, anchor=south,rotate=115] at (-2,1.5){} ;
	\node[petal,minimum size =0.2cm, anchor=south,rotate=-135] at (-2,6){} ;
	\node[petal,minimum size =0.2cm, anchor=south,rotate=-115] at (2,1.5){} ;
    \node[petal,minimum size =0.2cm, anchor=south,rotate=270] at (2,2){} ;
	\node[petal,minimum size =0.2cm, anchor=south,rotate=270] at (2,4){} ;
	\node[petal,minimum size =0.2cm, anchor=south,rotate=-115] at (2,-0.5){} ;
	\node[petal,minimum size =0.2cm, anchor=south,rotate=270] at (2,6) {} ;
	\node[petal,minimum size =0.2cm, anchor=south,rotate=270] at (2,8){} ;
	\node[petal,minimum size =0.2cm, anchor=south,rotate=-45] at (-2,4){} ;
    \node [below=1cm] at (0,0)
        {
            (a) Positive edge gadget $[x_i,c_{jr}]$
        };
\end{tikzpicture}
\hspace{1cm}
\begin{tikzpicture}[scale=0.6,petal/.style={kite,draw,pattern=grid, line width=1pt}]
    \begin{pgfonlayer}{foreground layer}
    
	\draw[fill=Blue] (0,0) circle (4pt) node[anchor=north]{$c_{jr}$};
	\draw[fill=black] (-2,0) circle (3pt);
	\draw[fill=black] (-2,2) circle (3pt);
	\draw[fill=red, fill opacity=1] (0,2) circle (4pt) node[anchor=south]{$x_{i}$};
	\draw[fill=black] (2,2) circle (3pt);
	\draw[fill=black] (2,0) circle (3pt);

	\node[petal,minimum size =0.2cm, anchor=south,rotate=90] at (-2,0){} ;
	\node[petal,minimum size =0.2cm, anchor=south,rotate=90] at (-2,2){} ;
	\node[petal,minimum size =0.2cm, anchor=south,rotate=270] at (2,2){} ;
	\node[petal,minimum size =0.2cm, anchor=south,rotate=270] at (2,0){} ;

    \end{pgfonlayer}

    \node[below=5mm] at (0,0) {(b) Negative edge gadget $[x_i,c_{jr}]$};
    \begin{pgfonlayer}{background layer}
    \draw (0,0) -- (-2,0) -- (-2,2) -- (0,2) -- (2,2) -- (2,0) -- (0,0) -- (0,2) ;
	\end{pgfonlayer}
 
\end{tikzpicture}

    \caption{Positive and Negative Edge Gadgets along with their respective proper min-3-choosable decomposition sets}
    \label{fig:edge gadgets}
\end{figure}
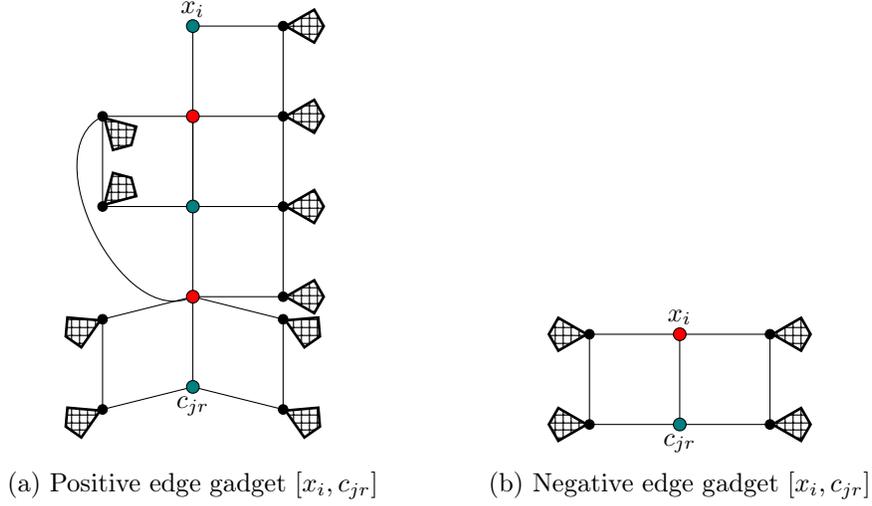

The figures in Fig. \ref{fig:edge gadgets}a and Fig. \ref{fig:edge gadgets}b depict the gadgets used for positive and negative edges in $P_{\varphi}$, respectively. It should be emphasized that these gadgets are not 2-choosable. For both types of gadgets, there are two types of minimal near 3-choosable deletion sets, consisting of either red or blue vertices along with the core vertices of each constraint gadget. These deletion sets are of sizes strictly less than $p$ (this fact will be used later in the proof). Suppose $(x_i, c_{jr})$ is an edge in $P_{\varphi}$ and let $A$ be a minimal independent near 3-choosable decomposition set in the corresponding edge gadget. We observe that $x_i$ is included in $A$ if and only if $c_{jr}$ is included in $A$ for a positive edge, and $x_i$ is included in $A$ if and only if $c_{jr}$ is not included in $A$ for a negative edge. This is how the edge gadgets propagate the choice of $x_i$ to $c_{jr}$. Note that the negative edge gadgets propagate the choice oppositely.


We have assumed that $\varphi$ is a 3CNF formula (an instance of \texttt{Planar-3-SAT}) with $n$ variables and $k$ clauses. For the construction of the graph $G_{\varphi, p}$, we define $q_{\varphi} = 280k$ and $p = q_{\varphi}^{\lceil \frac{2-\epsilon}{\epsilon}\rceil}$, where $0< \epsilon \leq 1$ is a fixed constant. It is easy to observe that $p$ is a polynomial in the input parameters $n$ and $k$ of $\varphi$.

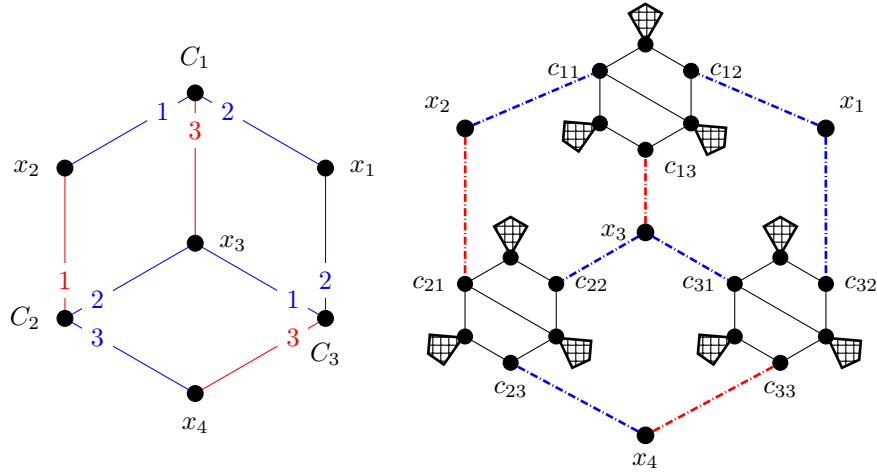
\begin{figure}
    \centering
    \begin{tabular}{m{5cm}m{6cm}}
    \begin{tikzpicture}[scale=1]
        \draw[fill=black] (0:0) circle (3pt)node[anchor=west,right=2mm] {$x_{3}$};;
        \draw[fill=black] (90:2) circle (3pt)node[anchor=south,above=2mm]{$C_{1}$};
        \draw[fill=black] (-90:2) circle (3pt)node[anchor=north,below=2mm]{$x_4$};;
        \draw[fill=black] (150:2) circle (3pt)node[anchor=east,left=2mm]{$x_{2}$};;
        \draw[fill=black] (210:2) circle (3pt)node[anchor=east,left=2mm]{$C_{2}$};;
        \draw[fill=black] (30:2) circle (3pt)node[anchor=west,right=2mm]{$x_{1}$};;
        \draw[fill=black] (-30:2) circle (3pt)node[anchor=north,below=2mm]{$C_{3}$};;

        \begin{pgfonlayer}{background layer}
        \path[draw,red] (90:2) -- (0:0) node [near start, fill=white] {3};
        \path[draw,blue] (90:2) -- (30:2) node [near start, fill=white] {2};
        \path[draw,blue] (90:2) -- (150:2) node [near start, fill=white] {1};
        \path[draw,red] (210:2) -- (150:2) node [near start, fill=white] {1};
        \path[draw,blue] (210:2) -- (0:0) node [near start, fill=white] {2};
        \path[draw,blue] (210:2) -- (270:2) node [near start, fill=white] {3};
        \path[draw,blue] (330:2) -- (0:0) node [near start, fill=white] {1};
        \path[draw,blue] (330:2) -- (30:2) node [near start, fill=white] {2};
        \path[draw,red] (330:2) -- (270:2) node [near start, fill=white] {3};
        
        \end{pgfonlayer}
    \end{tikzpicture}  
    &
    \begin{tikzpicture}[scale=.9, vertex/.style={draw,circle,fill=black,scale=0.3}]
        \pic[scale=0.35,rotate=-30](A) at (90:2) {clause1};
        \pic[scale=0.35,rotate=-30](B) at (210:2.3) {clause2};
        \pic[scale=0.35,rotate=-30](C) at (330:2.3) {clause3};
        \node[vertex,label=180:{$x_3$}](x3) at (0:0) {$x_{3}$};
        \node[vertex,label=270:{$x_4$}](x4) at (-90:3) {$x_{4}$};
        \node[vertex,label=120:{$x_2$}](x2) at (150:3.075) {$x_{2}$};
        \node[vertex,label=60:{$x_1$}](x1) at (30:3.075) {$x_{1}$};        
        \draw[red,densely dash dot, line width=1pt] (x3) -- (A300){};
        \draw[blue,densely dash dot, line width=1pt] (x3) -- (B60){}; 
        \draw[blue,densely dash dot, line width=1pt] (x3) -- (C180){}; 
        \draw[blue,densely dash dot, line width=1pt] (x2) -- (A180){}; 
        \draw[red,densely dash dot, line width=1pt] (x2) -- (B180){}; 
        \draw[blue,densely dash dot, line width=1pt] (x4) -- (B300){}; 
        \draw[red,densely dash dot, line width=1pt] (x4) -- (C300){}; 
        \draw[blue,densely dash dot, line width=1pt] (x1) -- (A60){}; 
        \node[label=0:{$c_{12}$}]() at (A60) {};
        \node[label=180:{$c_{11}$}]() at (A180) {};
        \node[label=345:{$c_{13}$}]() at (A300) {};
        \node[label=0:{$c_{22}$}]() at (B60) {};
        \node[label=180:{$c_{21}$}]() at (B180) {};
        \node[label=270:{$c_{23}$}]() at (B300) {};
        \node[label=0:{$c_{32}$}]() at (C60) {};
        \node[label=180:{$c_{31}$}]() at (C180) {};
        \node[label=270:{$c_{33}$}]() at (C300) {};
        \draw[blue,densely dash dot, line width=1pt] (x1) -- (C60){};
    \end{tikzpicture} \\
    \subcaption[0.4\textwidth]{A planar embedding of graph $P_{\varphi}$ with blue and red color represent positive and negative edges respectively.} & \subcaption[0.4\textwidth]{The graph $G_{\varphi,p}$ where the blue and red dashed lines denote positive and negative edge gadgets respectively.}
    \end{tabular}
    \caption{(a) The graph $P_{\varphi}$ corresponding to $\varphi = (x_1 \vee x_2 \vee \overline{x}_3) \wedge (\overline{x}_2 \vee x_3 \vee x_4) \wedge (x_1 \vee x_3 \vee \overline{x}_4)$, and (b) The graph $G_{\varphi,p}$ obtained with suitable replacement by clause and edge gadgets. }
    \label{fig:my_label}
\end{figure}
First, we take a planar embedding of $P_{\varphi}$. For each clause vertex $C_j$, we order the three edges incident on $C_j$ as $e_1, e_2$ and $e_3$ in a clockwise manner with respect to the considered planar embedding of $P_{\varphi}$. Next, we replace these clause vertices with their corresponding clause gadgets, and the three edges $e_1,e_2$ and $e_3$ incident on $C_j$ are distributed to the vertices $c_{j1}, c_{j2}$ and $c_{j3}$ respectively, such that the resulting graph embedding is planar. For an edge $e_l = (x_i, C_j)$ in $P_{\varphi}$, if the variable $x_i$ occurs as a positive literal in the clause $C_j$, then we replace $(x_i, c_{jr})$ with a positive edge gadget $[x_i,c_{jr}]$. Otherwise, we replace the edge $(x_i, c_{jr})$ with a negative edge gadget. This completes the construction of $G_{\varphi, p}$. It can be observed that $G_{\varphi, p}$ is a bipartite planar graph.

If $n_{\varphi, p}$ is the number of vertices in $G_{\varphi, p}$ then we show that $n_{\varphi, p} \leq pq_{\varphi}$. Each forbidden gadget has exactly $3p+5$ vertices. This implies that each clause gadget has exactly $3 + 3(3p+5) = 9p+18 \leq 27p$ vertices.  Also, each edge gadget has at most $10(3p+5) + 4 \leq 84p$ vertices. From this it follows that $n_{\varphi, p}$ has at most $27pk + 3k\cdot84p = 279pk < pq_{\varphi}$.

The following lemma plays a vital role in proving the inapproximability result for \texttt{Min-Near-3-Choosability}.


\begin{lemma} \label{lem-a}
Let $\varphi$ be a planar 3-CNF formula and let $OPT(G_{\varphi,p})$ be the size of a minimum near 3-choosable decomposition set in $G_{\varphi,p}$. Then the following statements hold.
\begin{enumerate}[label=(\alph*)]
        \item If $\varphi$ is not satisfiable, then $\mathrm{OPT}(G_{\varphi,p}) > p$.
        \item If $\varphi$ is satisfiable, then $\mathrm{OPT}(G_{\varphi,p}) < q_{\varphi}$.
 \end{enumerate}
\end{lemma}
\begin{proof}
    \begin{proofcnt} 
    \proofcount{
    Let us assume that $\varphi$ is not satisfiable. We take a minimum near $3$-choosable deletion set $A$, and define a truth assignment $\tau_A$. The truth value of a variable $x_i$ is true if and only if the corresponding variable-vertex $x_i$ lies in $A$. The fact that $\varphi$ is unsatisfiable implies the existence of an unsatisfied clause $C_j$ under $\tau_A$. For each edge $(x_i,c_{jr})$ in $P_{\varphi} $ the variable vertex $x_i$ in $A$ propagates the choice of $c_{jr}$ in the edge gadgets depending on the polarity of the edge $(x_i,c_{jr})$. Consequently, none of the vertices $c_{j1}, c_{j2}, c_{j3}$ in the clause gadget $C_j$ belong to $A$. As the clause gadget $C_j$ is not $2$-choosable, $A$ must include at least one of $w_{j1}, w_{j2}, w_{j3}$. Since these three vertices are the root of forbidden gadgets, we can conclude that $|A| > p$.}

    \proofcount{
     Let $\tau$ be a truth assignment that satisfies the formula $\varphi$. From $\tau$ we construct the set $A$ by including the vertices as given below.
     \begin{itemize}
         \item Include all the core vertices of forbidden gadgets in $G_{\varphi,p}$.

         \item For each positive edge gadget $[x_i,c_{jr}]$, if $\tau(x_i)$ is True, then we add the blue vertices in $A$. Otherwise, we add the red vertices in $A$.

         \item For each negative edge gadget $[x_i,c_{jr}]$, we add the red vertex in $A$ if $\tau(x_i)$ is true; otherwise we add the blue vertex in $A$. 
    \end{itemize}
        Let $C_j$ be a clause in $\varphi$, and let $x_i$ be a variable corresponding to some literal in $C_j$ that evaluates to true under $\tau$. Add the blue vertices from the edge gadget $[x_i,c_{jr}]$ along with core vertices from the clause gadget $C_j$ and edge gadget $[x_i,c_{jr}]$ to $A$. This process can be repeated for each clause in $\varphi$. The satisfiability of $\varphi$ and construction of $G_{\varphi,p}$ implies that the graph obtained by deleting the vertices in $A$ from $G_{\varphi,p}$ is $2$-choosable. Notably, $A$ is an independent set, and a minimum near $3$-choosable deletion set of $G_{\varphi,p}$.
       To determine the size of the set $A$, we consider the contribution of each clause and edge gadget to $A$. Each clause gadget contributes at most $3$ core vertices and $3$ non-root vertices to $A$, while each edge gadget contributes at most $10$ core vertices and $2$ non-root vertex (since the other one is already counted in the clause gadget). Since each clause gadget in $G_{\varphi,p}$ has $3$ edge gadgets attached to it, we have $|A| \leq 6k + 3k \cdot 12 = 42k$, which is less than $q_\varphi$. 
     }  
  \end{proofcnt} 
  \end{proof} 


Based on Lemma \ref{lem-a}, we can draw the following conclusions. If $OPT(G_{\varphi,p}) < p$, then $\varphi$ has at least one satisfying truth assignment. On the other hand, if $OPT(G_{\varphi,p}) > q_{\varphi}$, then $\varphi$ has no satisfying truth assignment.

Assume that \texttt{Min-Near-3-Choosability} is approximable within a factor of 
$|V|^{1-\epsilon}$, for some fixed $\epsilon \in (0, 1)$, where $|V|$ is the number of vertices in the input graph $G$.
Let $APX(G_{\varphi,p})$ be the size of a solution of \texttt{Min-Near-3-Choosability} for the instance $G_{\varphi,p}$ computed by this algorithm. We assume that $n_{\varphi,p}$ is the number of vertices in $G_{\varphi,p}$. Thus, we have
$$OPT(G_{\varphi,p}) \leq APX(G_{\varphi,p}) \leq n_{\varphi,p}^{1-\epsilon}OPT(G_{\varphi,p}).$$
If $APX(G_{\varphi,p}) < p$, then $OPT(G_{\varphi,p}) < p$. By applying Lemma \ref{lem-a} and its conclusion, it follows that $\varphi$ is satisfiable.

If $APX(G_{\varphi,p}) \geq p$, then $n_{\varphi,p}^{1-\epsilon} OPT(G_{\varphi,p}) \geq p$. Hence,
$$OPT(G_{\varphi,p}) \geq \frac{p}{n_{\varphi,p}^{1-\epsilon}} > \frac{p}{(pq_{\varphi})^{1-\epsilon}} = \frac{p^{\epsilon}}{q_{\varphi}^{1-\epsilon}} \geq \frac{(q_{\varphi}^{(2-\epsilon)/{\epsilon}})^{\epsilon}}{q_{\varphi}^{1-\epsilon}} = q_{\varphi}. $$
Thus, according to the conclusion of Lemma \ref{lem-a}, $\varphi$ has no satisfying truth assignment.
Therefore, we can decide whether $\varphi$ is satisfiable or not in polynomial time, implying that \texttt{Planar-3-SAT} is in $\mathbb{P}$. This completes the proof of Theorem \ref{thm-inappx}.  
\end{proof}

\section{Minimum 2-choosable deletion problem}
In this section, we consider the complexity of Minimum 2-Choosable Deletion problem (\texttt{Min-2-Choosable-Del}) which is a relaxation of \texttt{Min-Near-3-Choosability}. A subset $A \subseteq V$ is called a 2-choosable deletion set in $G$ if $G[V_G \setminus A]$ is 2-choosable.
Given a graph $G$, in \texttt{Min-2-Choosable-Del} it is required to find a 2-choosable deletion set $A$ of minimum size. It is easy to observe that \texttt{Min-2-Choosable-Del} is $\mathbb{NP}$-complete. This reduction is exactly the same as the reduction from minimum vertex cover to minimum feedback vertex set. In this reduction, from a graph $G$ (an instance of minimum vertex cover) a graph $G'$ (an instance of minimum feedback vertex set) is constructed in polynomial time. First, we make a copy of $G$, and then for each edge $e=(u, v) \in E_G$, we introduce a new vertex $v_e$ and a pair of edges $(u, v_e)$ and $(v, v_e)$. It is easy to observe that in this construction, each edge $e=(u, v)$ is replaced with a triangle consisting of vertices $u, v$ and $v_e$. 
It can be proved that $S$ is a vertex cover in $G$ if and only if $S$ is a feedback vertex set in $G'$.
Since an odd cycle is not 2-choosable, this reduction can be seen as a polynomial time reduction from minimum vertex cover to \texttt{Min-2-Choosable-Del}.

We design a $O(\log n)$-factor approximation algorithm for \texttt{Min-2-Choosable-Del} which is similar to the $O(\log n)$-factor approximation algorithm for minimum feedback vertex set \cite{bar1998approximation}. 
From the proof of Theorem (A. L. Rubin) in \cite{erdos} it follows that $G$ is not 2-choosable if $G$ contains a subgraph that belongs to one of the following five types of graphs. (a) An odd cycle, (b) two vertex disjoint even cycles connected by a path, (c) two even cycles having exactly one common
vertex, (d) $\theta_{a, b, c}$ with $a \neq 2$ and $b \neq 2$, and (e) $\theta_{2,2,2,t}$ with $t \geq 2$. Here $\theta_{2,2,2,t}$ is the graph consisting of two distinguished vertices $u$ and $v$ together with 4 vertex disjoint paths of length 2, 2, 2, and $t$ sharing common endpoints $u$ and $v$. Since graphs in each of these five types contain a cycle, we design an algorithm for \texttt{Min-2-Choosable-Del} by selecting cycles of small length to destroy these types of subgraphs.

\begin{algorithm2e}
\SetAlgoLined
\KwIn{$G=(V, E)$}
\KwOut{A graph $H$ without having vertices of degree 1 and long induced paths}
$H = G$\;
\While{there exists a vertex $v$ of degree 1 in $H$}{
  Delete $v$ from $H$ and the edge incident on it\;
}
\If{$H$ has a maximal induced path $P(u_1, u_k)=\langle u_1, u_2, \cdots, u_r \rangle $ with both $u_1$ and $u_r$ have degree 2, $r \geq 2$}{
    \tcc{In $P(u_1, u_r)$, $u_1$ and $u_r$ are distinct.}
    Delete all the vertices in $P(u_1, u_r)$ and the edges incident on them\;
    Introduce a new vertex $s$ with $c(s)= c(u_1) + \cdots + c(u_r)$\;
    Introduce the edges $(w, s)$ and $(s, z)$, where $w$ and $z$ are the neighbours of $u_1$ and $u_r$ not in this path, respectively\;
}
Return $H$\;
\caption{Preprocessing($G$)}
\label{algoPrePro}
\end{algorithm2e}

Based on Theorem \ref{thm:2choosablethm}, we define a preprocessing algorithm (see Algorithm \ref{algoPrePro}). Here, we will assign a count $c(v)=1$ to each vertex  $v \in V_G$. In Algorithm \ref{algoPrePro}, we replace an induced path $P(u_1, \cdots, u_r) = \langle u_1, u_2, \cdots, u_r \rangle$ (with $r \geq 2$) by a new vertex $s$ and assign its count as the sum of counts of the vertices in $P(u_1, u_r)$. 
If $G$ is a cycle, then preprocessing algorithm chooses a path $P(u_1, u_k)$ consisting of all the vertices of this cycle except one vertex. This preprocessing algorithm converts this cycle into a multigraph with two vertices and two parallel edges. We define $\mathcal{ C}'$ as a set of three types of graphs defined as follows. (a) $K_1$ with $c(p) \geq 1$, where $p$ is the only vertex in $K_1$. (b) A multigraph $T$ with $V_T= \{p, q\}$, two parallel edges among $p, q$ and $c(p) + c(q)$ is an even integer. (c) $K_{2, 3}$ with $c(v) = 1$, for each vertex $v$ except one vertex $u$ with $c(u)$ is an odd integer and $d(u)=2$. It can be observed that $G$ is a connected 2-choosable graph if and only if Preprocessing($G$) belongs to $\mathcal{ C}'$.

Let $G$ be a connected graph that is not 2-choosable and $H$ be the graph obtained from $G$ by using Preprocessing algorithm. If $d_H(v)=2$ and it has two distinct neighbors $p$ and $q$ then both $p$ and $q$ have degrees at least 3. By using this property of $H$ and the results in \cite{Os2022OnTM,bar1998approximation}, a breadth-first-search based algorithm can compute a cycle $T$ with at most $O(\log |V_H|)$ vertices.

\begin{algorithm2e}
\SetAlgoLined
\KwIn{$G=(V, E)$}
\KwOut{A subset $A$ of $V$ such that $G[V\setminus A]$ is 2-choosable}
$H = G$ and $A=\emptyset$\;
$H = Preprocessing(G)$\;
Delete all the components in $H$ that are in $\mathcal{ C}'$\;
\While{$H$ has at least one vertex}{
  Compute a cycle $T$ of length at most $O(\log n)$ in $H$\;
  $A = A \cup V_T$\;
  Delete all the vertices in $V_T$ from $H$ and the edges incident on them\;
  $H = Preprocessing(H)$\;
  Delete all the components in $H$ that are in $\mathcal {C}'$\;
}
Return $A$\;
\caption{2-Choosable-Deletion($G$)}
\label{algo2-choosable-del}
\end{algorithm2e}

In general, the set $A$ computed by Algorithm \ref{algo2-choosable-del} may not be a subset of $V_G$. If $v \in A$ is a vertex with $c(v) > 1$ then we will replace $v$ with the first vertex $u$ of the path represented by $v$. After this modification, $A$ becomes a subset of $V_G$ and a $2$-choosable deletion set in $G$.

\begin{theorem}
 \texttt{Min-2-Choosable-Del} can be approximated within a factor of $O(\log |V_G|)$.
\end{theorem}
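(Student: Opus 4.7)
The proof splits into correctness and approximation-ratio analysis. For correctness, the plan is to verify that the final set $A$ (after replacing each compressed vertex $v$ with the first vertex of the path it represents) is a valid 2-choosable deletion set of $G$. This follows from the key invariant that the Preprocessing routine together with the removal of $\mathcal{C}'$-components preserves 2-choosability: a connected graph $G$ is 2-choosable if and only if Preprocessing($G$) lies in $\mathcal{C}'$. This equivalence follows by induction on the preprocessing steps, using Theorem \ref{thm:2choosablethm}: degree-one deletion preserves both 2-choosability and the core, while the path-contraction operation preserves 2-choosability because attaching a long induced path between two vertices affects only the length/parity of cycles through it, which is exactly what the counts $c(\cdot)$ track. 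Hence when the outer while-loop terminates, every component that has left $H$ without joining $A$ was 2-choosable, so $G[V_G\setminus A]$ is 2-choosable.

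For the approximation ratio, the plan is to adapt the local-ratio/short-cycle argument of Bar-Yehuda et al.~\cite{bar1998approximation} to our setting. The first step is the structural observation that after a full round of Preprocessing followed by removal of $\mathcal{C}'$-components, every remaining vertex has (contracted) degree at least $3$. A standard breadth-first-search argument then produces, in polynomial time, a cycle $T$ of length $O(\log |V_H|)\le O(\log n)$. Since the algorithm adds all vertices of $T$ to $A$ and iterates on the residual instance, $|A|=\sum_i |T_i|$, and it suffices to show that the total number of iterations is at most $|A^\ast|$, where $A^\ast$ is any 2-choosable deletion set of $G$.

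The heart of the argument, and the main obstacle, is the following local lemma: at each iteration, $A^\ast$ must contain at least one vertex of the cycle $T_i$ found. The subtle point is that, unlike feedback vertex set, a 2-choosable deletion set need not intersect every cycle---even cycles and $\theta_{2,2,2m}$ are themselves 2-choosable, so their cycles are permissible. The plan is to exploit the fact that after Preprocessing and $\mathcal{C}'$-removal no 2-choosable component remains; hence $T_i$ lies in a component whose core is not in $\mathcal{C}$, which therefore contains one of the five Rubin forbidden subgraphs listed before Algorithm \ref{algoPrePro}. A case analysis over these five types will show that any set whose removal makes this component 2-choosable must delete a vertex of $T_i$---intuitively because after preprocessing the short cycle $T_i$ is forced to run through a branching vertex of the forbidden subgraph. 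Summing over iterations gives $|A^\ast|\ge (\text{number of iterations})$, hence $|A|\le O(\log n)\,|A^\ast|$.
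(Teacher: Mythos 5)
Your correctness half is fine and matches the paper's. The gap is in the ratio analysis, and it sits exactly where you flagged ``the heart of the argument'': the local lemma that every 2-choosable deletion set $A^\ast$ must contain a vertex of each cycle $T_i$ is false, and no case analysis over Rubin's five forbidden subgraphs can rescue it. Consider the graph $G$ consisting of a hub vertex $w$ joined by a single edge to one vertex of each of $k$ pairwise disjoint copies of $C_4$. Every component of $G-w$ is an even cycle, so $A^\ast=\{w\}$ is a 2-choosable deletion set and $\mathrm{OPT}(G)=1$; yet every cycle of $G$ lies inside a single petal and avoids $w$, since a cycle through $w$ would require a $w$-avoiding path between two petals, which does not exist. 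So at every iteration the component containing $T_i$ is non-2-choosable (two disjoint even cycles joined by a path through $w$, Rubin's type (b)), the cycle $T_i$ found is a petal, and $A^\ast\cap T_i=\emptyset$ --- even though $T_i$ does pass through a branching vertex, deleting that branching vertex is not what the optimum does. Worse, removing one petal leaves $w$ attached to the remaining petals, which is again non-2-choosable while at least two petals survive, so Algorithm \ref{algo2-choosable-del} runs for $r=k-1$ iterations and outputs a set of size $2(k-1)=\Theta(|V_G|)$ against an optimum of $1$. Thus not only your lemma but the weaker inequality you would actually need, $r\le|A^\ast|$, fails by an unbounded factor, and the algorithm itself does not achieve ratio $O(\log|V_G|)$ on this family.

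For what it is worth, the paper's own proof has the same hole: it asserts that every vertex of a 2-choosable deletion set is ``eliminated by'' one of the $r$ cycles, which is true but yields no lower bound on $|A^\ast|$ in terms of $r$; the charging silently uses $\mathrm{OPT}\ge r$, which the example above refutes. You correctly diagnosed that this problem differs from feedback vertex set because even cycles and $\theta_{2,2,2m}$'s are harmless, but the consequence is more severe than your proposal allows: one deletion can simultaneously neutralize many vertex-disjoint cycles that the algorithm pays for one by one. Any repair must modify the algorithm (e.g., allow deleting a vertex whose removal disconnects the graph into 2-choosable pieces), not merely sharpen the case analysis.
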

\begin{proof}
In the Algorithm \ref{algo2-choosable-del}, let $T$ be a cycle in $H'$ found by the algorithm in some iteration. After deletion of the vertices of $T$, let $J$ be the set of vertices removed from $H'$ by the preprocessing step and the 2-choosable components deletion step in the same iteration. We say that the vertices in $J \cup T$ are eliminated by $T$.

Let $T_1, T_2, \cdots, T_r$ be the cycles computed by the Algorithm \ref{algo2-choosable-del}. Now, it is easy to observe that every vertex in a 2-choosable deletion set of $G$ must be eliminated by at least one of these $r$ cycles. Also, these $r$ cycles are vertex disjoint. Since the algorithm includes all the vertices of the computed cycle $T$ into $A$ at each iteration, the error incorporated is at most $|V_T| \leq O(\log |V_G|)$.  
\end{proof}

\section{Conclusion}

The study of near $k$-choosability problems is particularly significant for graphs which are not $k$-choosable, as they offer an immediate refinement of ${k}$-choosability. The interest in studying near $3$-choosability stems from the characterization of $2$-choosable graphs presented in \cite{erdos}. It would be worthwhile to investigate the decidability of \texttt{Near-3-Choosability} for various families of graphs, especially $3$-colorable graphs. Additionally, Question \ref{ques: l-k_choosable} poses a wide range of problems, the solutions to which can provide new insights into the complexity of graph choosability. It would be interesting to know if there exists a constant factor algorithm for \texttt{MIN-2-Choosable-Del}, which we believe is not true.


\begin{thebibliography}{10}

\bibitem{agrawal2017improved}
A.~Agrawal, S.~Gupta, S.~Saurabh, and R.~Sharma.
\newblock Improved algorithms and combinatorial bounds for independent feedback
  vertex set.
\newblock In {\em 11th International Symposium on Parameterized and Exact
  Computation (IPEC 2016)}. Schloss Dagstuhl-Leibniz-Zentrum fuer Informatik,
  2017.

\bibitem{bar1998approximation}
R.~Bar-Yehuda, D.~Geiger, J.~Naor, and R.~M. Roth.
\newblock Approximation algorithms for the feedback vertex set problem with
  applications to constraint satisfaction and bayesian inference.
\newblock {\em SIAM journal on computing}, 27(4):942--959, 1998.

\bibitem{becker1996optimization}
A.~Becker and D.~Geiger.
\newblock Optimization of pearl's method of conditioning and greedy-like
  approximation algorithms for the vertex feedback set problem.
\newblock {\em Artificial Intelligence}, 83(1):167--188, 1996.

\bibitem{BONAMY}
M.~Bonamy, K.~K. Dabrowski, C.~Feghali, M.~Johnson, and D.~Paulusma.
\newblock Independent feedback vertex sets for graphs of bounded diameter.
\newblock {\em Information Processing Letters}, 131:26--32, 2018.

\bibitem{tcommon}
H.~Choi and Y.~Kwon.
\newblock On t-common list-colorings.
\newblock {\em Electronic Journal of Combinatorics}, 24, 08 2017.

\bibitem{festa1999feedback}
P.~Festa, P.~M. Pardalos, and M.~G. Resende.
\newblock Feedback set problems.
\newblock {\em Handbook of Combinatorial Optimization: Supplement Volume A},
  pages 209--258, 1999.

\bibitem{garey1979computers}
M.~R. Garey and D.~S. Johnson.
\newblock {\em Computers and intractability}, volume 174.
\newblock freeman San Francisco, 1979.

\bibitem{Gleb}
A.~Glebov, A.~Kostochka, and V.~Tashkinov.
\newblock Smaller planar triangle-free graphs that are not 3-list-colorable.
\newblock {\em Discrete Mathematics}, 290(2):269--274, 2005.

\bibitem{golovach2017survey}
P.~A. Golovach, M.~Johnson, D.~Paulusma, and J.~Song.
\newblock A survey on the computational complexity of coloring graphs with
  forbidden subgraphs.
\newblock {\em Journal of Graph Theory}, 84(4):331--363, 2017.

\bibitem{grotzsch1959}
H.~Grotzsch.
\newblock Ein dreifarbensatz fur dreikreisfreie netze auf der kugel.
\newblock {\em Wiss. Z. Martin Luther Univ. Halle-Wittenberg, Math. Nat.
  Reihe}, 8:109--120, 1959.

\bibitem{Gutner}
S.~Gutner.
\newblock The complexity of planar graph choosability.
\newblock {\em Discrete Mathematics}, 159(1):119--130, 1996.

\bibitem{lichtenstein1982planar}
D.~Lichtenstein.
\newblock Planar formulae and their uses.
\newblock {\em SIAM journal on computing}, 11(2):329--343, 1982.

\bibitem{Misra}
N.~Misra, G.~Philip, V.~Raman, and S.~Saurabh.
\newblock On parameterized independent feedback vertex set.
\newblock {\em Theoretical Computer Science}, 461:65--75, 2012.
\newblock 17th International Computing and Combinatorics Conference (COCOON
  2011).

\bibitem{Os2022OnTM}
P.~E. Os and L.~P{\'o}sa.
\newblock On the maximal number of disjoint circuits of a graph.
\newblock {\em Publicationes Mathematicae Debrecen}, 2022.

\bibitem{erdos}
{P Erdos, AL Rubin, H Taylor }.
\newblock {Choosability in graphs}.
\newblock {\em {In Proc. West Coast Conf. on Combinatorics, Graph Theory and
  Computing, Congressus Numerantium (Vol. 26, pp. 125-157)}}, {1979}.

\bibitem{speckenmeyer1988feedback}
E.~Speckenmeyer.
\newblock On feedback vertex sets and nonseparating independent sets in cubic
  graphs.
\newblock {\em Journal of Graph Theory}, 12(3):405--412, 1988.

\bibitem{TCS}
Y.~Tamura, T.~Ito, and X.~Zhou.
\newblock Approximability of the independent feedback vertex set problem for
  bipartite graphs.
\newblock {\em Theoretical Computer Science}, 849:227--236, 2021.

\bibitem{Voigt}
M.~Voigt.
\newblock A not 3-choosable planar graph without 3-cycles.
\newblock {\em Discrete Mathematics}, 146(1):325--328, 1995.

\bibitem{diam4}
A.~Yang and J.~Yuan.
\newblock Partition the vertices of a graph into one independent set and one
  acyclic set.
\newblock {\em Discrete Mathematics}, 306(12):1207--1216, 2006.

\bibitem{Zhu}
X.~Zhu.
\newblock A refinement of choosability of graphs.
\newblock {\em Journal of Combinatorial Theory, Series B}, 141, 08 2019.

\end{thebibliography}
\end{document}